
\documentclass[preprint]{imsart}

\bibliographystyle{imsart-nameyear}

\usepackage[T1]{fontenc}
\usepackage[latin1]{inputenc}
\usepackage[english]{babel}
\usepackage{amsmath,amsthm,amssymb,amsfonts,epic,latexsym,hyperref
}
\usepackage{mathabx}
\usepackage[dvips]{graphicx}
\usepackage[usenames]{color}





\usepackage{dsfont}

\usepackage{lipsum}
\usepackage{rotating}
\usepackage{dsfont}
\usepackage{geometry}
\usepackage{stmaryrd}
\usepackage{ifthen}
\usepackage[nottoc]{tocbibind}
\usepackage{verbatim}
\usepackage{tabularx}
\usepackage{framed}
\usepackage{longtable}
\usepackage{tabu}
\usepackage{mathtools}


\bibliographystyle{plain}

\allowdisplaybreaks

\def\cP{{\mathcal P}}

\def \E{I\!\!E}

\def\N{{\bf N}}

\def \cC{\mathcal C}

\def \cP{\mathcal P}



\newcommand{\R}{\mathbb {R}}

\def\indiq{{\bf 1}}

\newcommand{\B}{\mathcal{B}}
\newcommand{\F}{\mathcal{F}}

\newcounter{comptetape}
\setcounter{comptetape}{1}

\newtheorem{thm}{Theorem}[section]
\newtheorem{prop}[thm]{Proposition}
\newtheorem{cor}[thm]{Corollary}
\newtheorem{ass}{Assumption}
\newtheorem{lem}[thm]{Lemma}
\newtheorem{rem}[thm]{Remark}

\begin{document}

\begin{frontmatter}

\title{Mean field limits of interacting particle systems with positive stable jumps}

\runtitle{Systems of particles with stable jumps}

\begin{aug}

\author{\fnms{Eva} \snm{L\"ocherbach}\thanksref{m1}\ead[label=e4]{locherbach70@gmail.com}},
\author{\fnms{Dasha}
\snm{Loukianova}\thanksref{m2}\ead[label=e5]{dasha.loukianova@univ-evry.fr}}

\address{\thanksmark{m1}Statistique, Analyse et Mod\'elisation Multidisciplinaire, Universit\'e Paris 1 Panth\'eon-Sorbonne, EA 4543 et FR FP2M 2036 CNRS
  \thanksmark{m2}Laboratoire de Math\'ematiques et Mod\'elisation d'\'Evry,  Universit\'e
  d'\'Evry Val d'Essonne, UMR CNRS 8071 
  }

\runauthor{E. L\"ocherbach and D. Loukianova}

 \end{aug}

\begin{abstract}
This note is a companion article to the recent paper \cite{DEE}. We consider mean field systems of interacting particles. Each particle jumps with a jump rate depending on its position. When jumping,  a macroscopic quantity is added to its own position. Moreover, simultaneously, all other particles of the system receive a small random kick which is distributed according to a positive $\alpha-$stable law and scaled in $N^{-1/\alpha},$ where $0 <  \alpha < 1.$ In between successive jumps of the system, the particles follow a deterministic flow with drift depending on their position and on the empirical measure of the total system. In a more general framework where jumps and state space do not need to be positive, we have shown in \cite{DEE} that the mean field limit of this system is a McKean-Vlasov type process which is solution of a non-linear SDE, driven by an $ \alpha-$stable process. Moreover we have obtained in \cite{DEE} an upper bound for the strong rate of convergence with respect to some specific distance disregarding big jumps of the limit stable process. In the present note we consider the specific situation where all jumps are positive and particles take values in $\R_+. $ We show that in this case it is possible to improve upon the error bounds obtained in \cite{DEE} by using an adhoc distance obtained after applying a concave space transform to the trajectories. The distance we propose here takes into account the big jumps of the limit $ \alpha-$stable subordinator.\\
\noindent{\bf MSC2020: } 60E07  \and 60G52 \and 60K35

\end{abstract}

 \begin{keyword}
 \kwd{Mean field interaction}
 \kwd{Piecewise deterministic Markov processes}
 \kwd{Interacting particle systems}
 \kwd{Conditional propagation of chaos}
\kwd{Exchangeability}
\kwd{$\alpha-$stable subordinator}
\kwd{Time changed random walks with stable increments} 
 \end{keyword}
\end{frontmatter}

\section{Introduction}
We consider a system of interacting particles  $ (X^{N, i}_{t}) ,  t\geq 0, \;  1 \le i \le N , $ taking values in $\R_+^N$  and  evolving according to 
\begin{multline}\label{eq:system}
X^{N, i}_t =  X^{i}_0 +   \int_0^t  b(X^{N, i}_s , \mu_s^N )  ds +  \int_{[0,t]\times\R_+\times\R_+} \psi (X^{N, i}_{s-} )  \indiq_{ \{ z \le  f ( X^{N, i}_{s-}) \}} \pi^i (ds,dz,du) \\
+ \frac{1}{{N}^{ 1/\alpha} }\sum_{ j \neq i } \int_{[0,t]\times\R_+\times\R_+ }u \indiq_{ \{ z \le  f ( X^{N, j}_{s-}) \}} \pi^j (ds,dz,du),
\end{multline} 
where 
$ \mu_t^N = N^{-1}  \sum_{i=1}^N \delta_{X_t^{N, i } } $
is the empirical measure of the system at time $t.$ In the above equation, $(\pi^{i}(ds,dz,du))_{i\geq 1}$ is a family of i.i.d. Poisson measures on $(\R_+\times \R_+\times \R_+)$ having intensity measure $dsdz\nu(du)$ and  $(X_0^{i})_{i\geq 1}$  is an i.i.d. family of $\R_+-$valued random variables independent of the Poisson measures, distributed according to some fixed probability measure $\nu_0 $ on $ (\R_+, {\mathcal B} ( \R_+) ).$ 

The motivation for considering such systems comes originally from neuroscience where each $X^{N, i }_t$ represents the membrane potential of a given neuron belonging to a population of $N$ neurons which are all interconnected. Neurons spike at a rate depending on their membrane potential and when spiking, go back to a resting potential value, while the associated postsynaptic neurons receive the influence of the synaptic weight of the spiking neuron. In particular, interactions appear mostly through spiking, that is, at the jumping times. In formula \eqref{eq:system}, the transition of the spiking neuron's potential to the resting value is represented by the transition $ x^i \mapsto x^i + \psi ( x^i) , $ and the influence of the synaptic weight  by the transition $ x^j \mapsto x^j + u/N^{1/\alpha } ,$ for all $j \neq i , $ where $i $ is the index of the spiking neuron.  

The present note is a companion paper to the recent paper \cite{DEE}, where we have studied the mean field limits of system \eqref{eq:system} in the general case where $ \nu $ is the law of a  random variable belonging to the domain of attraction of a stable law of index $ \alpha \in ( 0, 2 )$ and where the state space of each particle is $ \R.$ 
We have proved there the strong convergence of the system \eqref{eq:system} to the limit system 
\begin{eqnarray}
\label{eq:dynlimintrovrai}
\bar X^i_t &=&  \bar X^i_0+  \int_0^t b(\bar X^i_s, \bar \mu_s)    ds +  \int_{[0,t]\times\R_+ } \psi (\bar X^{ i}_{s-} )  \indiq_{ \{ z \le  f ( \bar X^{ i}_{s-}) \}}  \bar \pi^i (ds,dz ) \nonumber \\
&&\quad \quad + \int_0^t  (\bar \mu_{s-} ( f) )^{1 / \alpha}  d S^\alpha_s  ,\; \; t \geq 0 , i \in \N,
\end{eqnarray}
where 
\begin{equation}\label{eq:projpi}
 \bar \pi^i (ds, dz) = \pi^i ( ds, dz, \R_+) 
\end{equation} 
is the projected Poisson random measure having intensity $ds  dz .$ In the above formula,  $ \bar \mu_s = {\mathcal L} ( \bar X^1_s | S^\alpha_u, u \le s ) $ is the conditional law of any particle given the common noise $ S^\alpha.$ $S^\alpha$ denotes the stable process $ S^\alpha $ having stability index $ \alpha  ;$ this stable process is independent of the collection of Poisson random measures $ (\bar \pi^i (ds, dz))_{ i \geq 1 }$ and of the initial values $ X^i_0, i \geq 1 , $ and it represents a source of common noise for the limit system. As a consequence, the conditional propagation of chaos property holds. 

More precisely we have shown in \cite{DEE} that it is possible to construct a coupling of the finite system $ (X^{N, i })_{ 1 \le i \le N, t \geq 0} $ with the limit system such that for any truncation level $ K > 0, $ for any $ q < \alpha , $ 
\begin{equation}\label{eq:strongerror1}
 \E \left( \indiq_{\{ t < T_K\}} | X_t^{N, i } - \bar X_t^i | \wedge | X_t^{N, i } - \bar X_t^i |^{ q \wedge 1 }  \right) \le C_t (K) r_N , \mbox{ where } T_K = \inf \{ t \geq 0 : | \Delta S_t^\alpha | > K \} , 
\end{equation} 
and where $r_N$ is an explicit rate of convergence.  The constant $ C_t (K) $ depends on the truncation level and blows up as $ K \to \infty.$

The present note restricts attention to the particular subclass of models where $ \alpha < 1 ,$ where all jumps are positive and where $ \nu $ is precisely the law of a strictly stable positive random variable. In this situation we are able to obtain a control of the strong error \eqref{eq:strongerror1} without truncation, that is, by taking formally $ K = \infty . $ We do this by applying a space transform to the particles by means of the function 
\begin{equation}\label{eq:dq}
d_q ( x) = x \wedge x^q , x \geq 0, 
\end{equation}
for some fixed $ q \in (0, \alpha ) ,$ considering the distance $ \E  \left( | d_q (X_t^{N, i }) - d_q(\bar X_t^i) | \right) .$ We heavily employ the fact that $d_q$ is concave on $ \R_+$ and that all jumps are positive, which allows us to deal with the stochastic integral  $ \int_0^t  (\bar \mu_{s-} ( f) )^{1 / \alpha}  d S^\alpha_s$ without truncation -- an approach which is not possible in the general framework considered in \cite{DEE}.  

Our paper follows closely the ideas of \cite{DEE} to which we refer for bibliographic comments and further motivation of the model under study, see also  \cite{andreis_mckeanvlasov_2018}, \cite{cavallazzi}, \cite{DGLP}, \cite{jourdainetal} and \cite{graham92} for papers devoted to propagation of chaos in similar models, and \cite{carmona_mean_2016}, \cite{coghi_propagation_2016}, \cite{dermoune_propagation_2003} and \cite{ELL1} and \cite{ELL2} for articles studying situations with common noise, where the property of conditional propagation of chaos holds. 

With respect to \cite{DEE}, in the present note, we mainly highlight the most important differences, which are mostly due to a different control of the above mentioned stochastic integral, see Lemma \ref{lem:it} below. We quote the main results of \cite{DEE} in order to ease the readability of this paper. Moreover, we give in Lemma \ref{lem:rs} the main argument which shows how to construct the limit stable subordinator $S^\alpha $ explicitly on an extension of the original probability space, using the self similarity property of the stable random variables that are represented by the atoms $u$ of the Poisson random measures $ \pi^j$ of \eqref{eq:system}. 

We now give the precise assumptions on our model parameters and state precisely our main results in this particular subclass of models. 

\subsection{Model assumption, notation and main results}
\subsection*{Notation}
Throughout this paper we shall use the following notation. For $p> 0,$ $\mathcal{P}_p(\R_+)$ denotes the set of probability measures on $\R_+$ that have a finite moment of order~$p$.  
For two probability measures $\nu_1, \nu_2 \in \mathcal{P}_p (\R_+),$ the Wasserstein distance of order $p$ between $\nu_1$ and $\nu_2$  is defined as
$$
W_p(\nu_1,\nu_2)=\inf_{\pi\in\Pi(\nu_1,\nu_2)}\left( \int_{\R_+}\int_{\R_+} |x-y|^p \pi(dx,dy) \right)^{(1/p) \wedge 1 } ,
$$
where $\pi$ varies over the set $\Pi(\nu_1,\nu_2)$ of all probability measures on the product space $\R_+ \times \R_+$ with marginals $\nu_1$ and $\nu_2$. We shall also use the following notation. For any fixed function $ a : \R_+ \to \R_+$ which is injective and any two probability measures $ \nu_1 , \nu_2 $ such that $ a \in L^1 ( \nu_1 ) \cap L^1 ( \nu_2 ) , $ we put 
$$ W_{1,a} ( \nu_1, \nu_2 ) =\inf_{\pi\in\Pi(\nu_1,\nu_2)}\left( \int_{\R_+}\int_{\R_+} |a(x)-a(y)| \pi(dx,dy) \right) .$$
In particular we will also apply this notation for the function $d_q$ introduced in \eqref{eq:dq} above.

Throughout this article, $\nu $ designs the law of a strictly stable and positive  random variable $Y \geq 0 $ of index $ \alpha \in ] 0, 1 [ $ which is characterized by its Laplace transform defined for any $ \lambda \geq 0 $ by 
\begin{equation}\label{eq:1}
 E (e^{- \lambda Y}) = e^{ -  \lambda^\alpha  }.
\end{equation} 
Moreover,  $S^{\alpha}$ denotes the $\alpha$-stable increasing process with $0<\alpha<1.$ It is characterized by the following facts.
\begin{enumerate}
\item All paths of $S^{\alpha}$ are c\`adl\`ag and non-decreasing, and $ S^\alpha_0 = 0.$ 
\item $ S^\alpha $ is a L\'evy process, and $\E e^{-\lambda S_t^{\alpha}}=e^{- t  \lambda^{\alpha}};\; \lambda \geq 0,\; t\geq 0.$
\end{enumerate} 
In particular,  $S^\alpha_1 \sim \nu.$

\subsection*{Model assumptions}

We suppose that 

\begin{ass}\label{ass:b}
The drift function $b : \R_+ \times  {\mathcal P}_1 ( \R_+) \to \R $ satisfies:
\begin{itemize}
\item [ i)]   $ b (0, \mu ) \geq 0 $ for all $ \mu \in  {\mathcal P}_1 ( \R_+).$
\item[ii)] $ \sup_{x\in\R_+,\; \mu \in\cP_1(\R_+)} |b(x,\mu) | < \infty . $ 
\item[iii)] There exists a constant $ C > 0, $ such that for all $ x, \tilde x  \in \R_+  , \mu , \tilde \mu  \in \mathcal{P}_1(\R_+) , $ 
$$ |b(x, \mu ) - b(\tilde x, \tilde \mu) |\le C ( |d_q(x) - d_q(\tilde x) |  +  W_{1, d_q} ( \mu, \tilde \mu )) . $$
\end{itemize}
\end{ass}

Concerning the parameters that are involved in the jumps, we state the following set of assumptions.

\begin{ass}\label{ass:1}
i) $f$ is positive, lowerbounded by some strictly positive constant $ \underline f > 0 $ and bounded. \\
ii) $ \psi$ is bounded and positive.  \\
iii) Moreover, there exists a constant $C > 0, $ such that for all $ x, y \geq 0, $ 
\begin{equation}\label{eq:fq}
 |f(x) - f(y ) | + | \psi ( x) - \psi (y) | \le C  | d_q(x) - d_q(y ) | .
\end{equation} 
\end{ass}

Finally we assume that 
\begin{ass}\label{ass:nu0}
$ \nu_0$ is a probability measure on $ \R_+ $ which  admits a finite first moment, and $ \nu_0 (\{ 0 \} )= 0.$
\end{ass}

To study the limit system, we consider one typical particle $ \bar X_t$ representing the limit system \eqref{eq:dynlimintrovrai}. It  evolves according to
\begin{equation}\label{eq:limitequation}
\bar X_t =  X_0 +   \int_0^t  b(\bar X_s , \bar \mu_s )  ds +  \int_{[0,t]\times\R_+ } \psi (\bar X_{s-} )  \indiq_{ \{ z \le  f ( \bar X_{s-}) \}} \bar \pi (ds,dz) 
+ \int_{[0,t] } \left( \bar \mu_{s-}(f)  \right)^{1/ \alpha}   d S_s^\alpha , 
\end{equation} 
where $X_0 \sim \nu_0, $ $ \bar \mu_s = {\mathcal L} ( \bar X_s | S^\alpha_u, u \le s ) ,$ $ \bar \pi $ is a Poisson random measure on $ \R_+ \times \R_+ $ having intensity $ ds dz,$  and where $ S^\alpha , \bar \pi  $  and $ X_0$ are independent.

We now collect some known results that have already been established in \cite{DEE}.  
\begin{prop}\label{prop:wellposedfinite}
Under Assumptions \ref{ass:b}, \ref{ass:1} and \ref{ass:nu0}, equations \eqref{eq:system} and  \eqref{eq:limitequation} possess a unique strong solution taking values in $\R_+^N  $ (in $\R_+,$ respectively) satisfying that for all $ T > 0 $ and for any $ 1 \le i \le N, $   
\begin{equation}\label{eq:apriorifinite}
\E  \left(\sup_{ t \le T } |d_q( X_t^{N, i })| \right) +\E \left(  \sup_{ s \le T} | d_q( \bar X_s)| \right) \le C_T  
\end{equation}
for a constant $C_T$ that does not depend on $N$ neither on $i.$ 
\end{prop}
The above statement follows from Theorem 2.12 and Theorem 2.13 of \cite{DEE}.

\begin{rem}\label{rem:33}
Since $X_0 , X^i_0 \geq 0 , $ almost surely, $ X_t^{N, i } , \bar X_t  \geq 0 $ for all $ 1 \le i \le N,$ $t \geq 0.$ Indeed, all jumps of the process are positive such that the process has to hit $ ] - \infty , 0] $ in a continuous way, that is, due to the presence of the drift term $b; $ which is not possible since $ b( 0, \mu ) \geq 0 $ for all $ \mu,$ by assumption.
\end{rem}
 
Before stating the main theorem of the present note, we slightly strengthen Assumption \ref{ass:nu0} to
\begin{ass}\label{ass:nu0bis}
$ \nu_0$ is a probability measure on $ \R_+ $ which  admits a finite moment of order $ (2 \alpha) \vee 1  ,$ and $ \nu_0 (\{ 0 \} )= 0.$
\end{ass}

\begin{thm}\label{strongconvergence}
Grant Assumptions  \ref {ass:b}, \ref{ass:1} and \ref{ass:nu0bis} and fix $N\in\N^*.$   Then it is possible to construct, on an extension of $ (\Omega, {\mathcal A}, {\mathbf P}), $  a one-dimensional stable process $(S_t^{N, \alpha})_{t \geq 0},$ depending on $ N,$ which is independent of  the initial positions $X^{i}_0, i \geq 1,$ and of $ (\bar \pi^i)_{i \geq 1} , i \geq 1 , $
such that  the following holds. Denoting $ (\bar X^{N, i })_{ i \geq 1 } $ the strong solution of the limit system \eqref{eq:limitequation}, driven by $S^{N, \alpha}$ and by $ (\bar \pi^i )_{ i \geq 1},$ we have for every $T>0,$ and for any $ 1 \le i \le N, $ for all $ t \le T,$ 
\begin{multline}
\label{strongbound}
 \E (  | d_q(X^{N,i}_t) -d_q(\bar X^{N, i}_t)  | ) \leq \\
 C_T  \left( N^{ 2 ( 1 - q/\alpha) -  \frac{ 4 (1 - \frac{q}{\alpha}) + q }{ 2 ( 1 - \frac{q}{\alpha} ) + q + 2  } }+ N^{- \frac{q}{\alpha} \frac{ 4 (1 - \frac{q}{\alpha}) + q }{ 2 ( 1 - \frac{q}{\alpha} ) + q + 2  }} +  [ N^{ - q}  \indiq_{\{ q < \frac12\}} + N^{- 1/2 }\indiq_{\{ q > \frac12\}}] \right) .
 \end{multline}
 \end{thm}

\begin{rem}
\eqref{strongbound} holds for any $ q < \alpha.$ To understand what is the leading order, let us take $ q = \alpha $ such that the right hand side of \eqref{strongbound} is given by 
$$ N^{ - \frac{\alpha}{2 + \alpha}} +  [ N^{ - \alpha }  \indiq_{\{ \alpha < \frac12\}} + N^{- 1/2 }\indiq_{\{ \alpha > \frac12\}}] .$$
The leading order of convergence is therefore given by 
$$ C_T N^{ - \frac{\alpha}{2 + \alpha}}.  $$ 

Of course, this rate of convergence is only the limit rate, obtained for $q = \alpha$ which is not a valid choice for $q.$ Therefore, the actual rate of convergence that we obtain is any rate that is slightly slower than the above one.
\end{rem}

\begin{rem}
It is possible to extend the result of Theorem \ref{strongconvergence} to the more general situation where $ \nu $ is not precisely the law of a strictly stable random variable but does only belong to the domain of attraction of it. 
We have decided to not include this case in the present note; it is treated in the general framework (where jumps are not obliged to be positive) in \cite{DEE}. 
\end{rem}

\subsection{Discussion of our assumptions}
\begin{rem}
The function $d_q$ is concave on $\R_+ $ and satisfies $ d_q(0)= 0.$ In particular, this implies that it is sub-additive such that 
$$ | d_q(y) - d_q(y') | \le d_q ( |y - y'|) \le  |y - y'|^q $$ 
for all $ y, y' \geq 0 .$ Therefore,  item iii) of Assumption \ref{ass:b} implies that 
$$ |b(x, \mu ) - b(\tilde x, \tilde \mu) |\le C ( |d_q(x) - d_q(\tilde x) | + W_q( \mu , \tilde \mu)).$$
Thus, for any fixed $x,$ the mapping  $ \mu \mapsto b( x, \mu ) $ is Lipschitz continuous with respect to the Wasserstein $q-$distance - but the latter continuity is not quite sufficient for our purposes since we have to work with the modified Wasserstein $d_q-$distance which is (slightly) stronger.
\end{rem}

\begin{rem}
We will often use that Assumption \ref{ass:1} implies that 
\begin{equation}\label{eq:fq2}
 |f(x) - f(y ) | + |\psi ( x) - \psi (y ) |  \le C |x-y|^q ,
\end{equation} 
for any $ x, y \geq 0.$ 
We will also  repeatedly use  that for any two probability measures $ \mu , \nu  \in \mathcal{P}_1(\R_+) ,$ 
\begin{equation}\label{eq:controlmuf}
|( \mu ( f))^{1 / \alpha } - ( \nu ( f) )^{1/\alpha } |    \le C  | \mu ( f) - \nu ( f) | ,
\end{equation}
where we used that $ z \mapsto z^{ 1/ \alpha } $ is Lipschitz on $ [ 0, \|f\|_\infty ]  .$ Finally, observe that we have the upper bound 
\begin{equation}\label{eq:controlmuf2}
|( \mu ( f))^{1 / \alpha } - ( \nu ( f) )^{1/\alpha } | \le C W_q ( \mu, \nu ) ,
\end{equation}
which follows from \eqref{eq:controlmuf} and \eqref{eq:fq2} since
$$  | \mu ( f) - \nu ( f) | \le \int | f(x) - f(y ) | \pi (dx, dy )  \le C \int |x-y |^q \pi ( dx,dy ) ,$$
which holds for any coupling $ \pi (dx, dy ) $ of $ (\mu, \nu ) , $ such that taking the optimal coupling for $ W_q,$ the assertion follows.

\end{rem}

To check that the drift function  $b$ actually satisfies item iii) of of Assumption \ref{ass:b}, it is convenient to  rely on the notion of functional derivative, introduced for instance in \cite{Jourdain}.
Remember that a function $B:{\cP}_1(\R_+)\to\R_+; $ $\nu\mapsto B(\nu)$ admits a functional derivative $\delta_{\nu} B: \R_+\times {\cP}_1(\R_+)\to\R, $ $ ( y, \nu ) \mapsto \delta_{\nu} B ( y , \nu ),$ if for each $ \nu \in {\cP}_1 ( \R_+), $ $ \sup_y \left| \delta_{\nu} B ( y , \nu ) \right| / ( 1 + |y|) < \infty $ and 
$$B(\nu)-B(\mu)=\int_0^1\int_{\R_+} \delta_{\nu} B(y,(1-\lambda)\mu+\lambda \nu)(\nu-\mu)(dy)d\lambda.$$

\begin{rem}\label{cor:couplingboundb}
Let us suppose that $ x \mapsto b (x, \mu ) $ is $d_q-$Lipschitz, uniformly in $ \mu ,$ and that for all $x\in\R_+,$ $\mu \mapsto b (x,\mu)$ admits a functional derivative $\delta_{\mu} b(x,y,\mu)$ which satisfies
\begin{equation}\label{eq:boundb}
 \sup_{x\in\R_+;\; \mu\in\cP_1(\R_+)}|\delta_{\mu} b(x,y,\mu)-\delta_{\mu} b(x,y',\mu)|\leq C |d_q(y)-d_q(y')|,
\end{equation} 
for all $ y \in \R_+$ Then item iii) of Assumption \ref{ass:b} holds true. 

If $ \delta_\mu b $ does not depend on the measure variable, then $b$ depends linearly on $ \mu $ and can be written as 
\begin{equation}\label{eq:linearb}
b (x, \mu ) = \int_{\R_+} \tilde b ( x, y ) \mu (dy), \mbox{ where } \tilde b ( x, y ) = \delta_\mu b (x, y, \mu).
\end{equation}
\end{rem}

The proof of the fact that \eqref{eq:boundb} implies item iii) of Assumption \ref{ass:b} is postponed to the Appendix.

\begin{cor}
Under the assumptions of Theorem \ref{strongconvergence}, if $b$ is linear in $ \mu, $ that is, \eqref{eq:linearb} holds and if moreover $ \tilde b  $ is bounded, then we obtain the better rate 
\begin{equation}
\label{strongboundbetter}
 \E (  | d_q(X^{N,1}_t) -d_q(\bar X^{N, 1}_t)  | ) \leq C_T  \left(  N^{ 2 ( 1 - q/\alpha) -  \frac{ 4 (1 - \frac{q}{\alpha}) + q }{ 2 ( 1 - \frac{q}{\alpha} ) + q + 2  } }+ N^{- \frac{q}{\alpha} \frac{ 4 (1 - \frac{q}{\alpha}) + q }{ 2 ( 1 - \frac{q}{\alpha} ) + q + 2  }} + N^{ - 1/2}  \right) .
 \end{equation} 
\end{cor}

\section{Main steps of the proof of Theorem \ref{strongconvergence}}

\subsection{A smooth version of $d_q$}
To compare $d_q ( X^{N, i  }_t ) $ to $ d_q ( \bar X^{N, i }_t), $ it is convenient to replace the function $d_q$ which has a singularity in $ x= 1 $ by a smooth version $a$ that we introduce now. This will allow us to apply Ito's formula in order to calculate $d_q(X^{N,1}_t) -d_q(\bar X^{N, 1}_t).$ Somewhere below we will also consider approximating processes which might take negative values. This is why we define the function $a$ on the whole real line $ \R$ and not only on $\R_+ .$

\begin{prop}\label{prop:a}
There exists a function $a : \R \to \R$ belonging to ${\cC}^1(\R)$ which satisfies
\begin{itemize}
\item[i)] $a$ is concave on $ \R_+$ and $a(0)=0.$
\item [ii)]  There exists a constant $C > 0 $ such that for all $ x, y  \in \R  , $  $|a' (x) - a'(y) | \le  C |a(x) - a(y) |. $ 
\item[iii)] $a$ is Lipschitz.
\item[iv)] $a$ is strictly increasing.
\item [v)] There exists a constant $ c$ (positive or negative) such that $a(x)=c+ x^q$ for all $x\geq 1.$
\item[vi)] For some $0 <  c_1 < c_2$ we have  $ c_1 | d_q ( x) - d_q(y) | \le |a(x) - a(y) | \le c_2 |d_q ( x) - d_q (y) | $ for all $ x, y \geq 0.$ 
\end{itemize}

\end{prop}

\begin{proof}
There are several ways of constructing such a function $a.$ Here is one possibility. We put 
\begin{equation}\label{eq:a} a( x) = \left\{ 
\begin{array}{ll}
 ( q + q (1-q)) x - \frac12 q (1-q) x^2 & 0 \le x \le 1 \\
 x^q  + (q+ \frac12 q (1-q) -1) & x \geq 1 
 \end{array}
 \right\} ,
 \end{equation}
and finally $ a (-x) = - a ( x) $ for all $ x \le 0.$ Then it is easy to check that this function is actually $C^2 $ and that it satisfies  i) and all points iii)--vi). Concerning point ii), notice that by construction, $ a'' / a' $ is bounded, which 
implies ii).  
\end{proof}

Let us now discuss some obvious properties of the function $a.$

\begin{rem}
It follows from item $i)$ of Proposition \ref{prop:a}  that for all $ x, y \geq 0,$
$$ a(x+y ) - a(x) \le a(y) - a(0)   =  a(y),$$
since $ a (0 ) = 0,$ which means that for all $ x, y \geq 0,$ 
\begin{equation} a(x+y) \le a(x) + a(y) ,
\end{equation}
that is, the function $a$ is sublinear on $ \R_+ .$ 

We shall also use that for all $ x, y \geq 0  , $ and all $ u \geq 0, $
\begin{equation}
| a ( x + u ) - a(y + u )| \le |a(x) - a(y ) | ,
\end{equation} 
which follows from the fact that $a$ is concave on $ \R_+ . $ Let us notice furthermore that $a'$ is bounded, which is a consequence of $iii).$ 
\end{rem}
In the sequel the following property of $a$ will also be used. 

\begin{lem}\label{lem:useful}
Let $ x \geq 0 $ and $ y  \in \R.$ Then 
\begin{equation}\label{eq:goodtoknow}
(i)\;  |a(x) - a(y) | \le 2 a( |x-y|);\quad\quad  \mbox{ and}\quad\quad  (ii)\,  \forall z \geq 0,\;   |a( x+z ) - a(y+z)|\le 2 a(2 |x-y|) .
\end{equation} 
\end{lem}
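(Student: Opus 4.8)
The plan is to prove Lemma \ref{lem:useful} by exploiting the two structural properties of $a$ that we have already recorded: $a$ is concave and increasing on $\R_+$ with $a(0)=0$ (hence sublinear), and $a$ is Lipschitz and even on the appropriate pieces only through its definition on $\R$. The key subtlety is that $x$ is restricted to $\R_+$ but $y$ ranges over all of $\R$, so we must keep track of signs. First I would treat \emph{(i)}. If $x\geq y$, then by monotonicity on $\R_+$ and the fact that $a$ is increasing, and since $x-y\geq 0$, I would write $a(x)-a(y)$; when $y\geq 0$ as well this is $\leq a(x-y)$ by sublinearity (concavity plus $a(0)=0$ gives $a(x)=a((x-y)+y)\leq a(x-y)+a(y)$), which is even better than the claimed bound. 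When $y<0\leq x$, I would use that $a$ is Lipschitz with some constant $L$ on the negative part, more precisely $a(y)\geq a(0)-L|y| = -L|y|$, but to stay within the statement's form it is cleaner to note $a(x)-a(y) = (a(x)-a(0)) + (a(0)-a(y))$ and bound each piece: $a(x)-a(0)=a(x)\leq a(|x-y|)$ since $x\leq x-y=|x-y|$ and $a$ increasing, while $a(0)-a(y)\leq a(|y|)\leq a(|x-y|)$ for the same reason — summing gives the factor $2$. If instead $y>x\geq 0$, then $a(x)-a(y)\leq 0$ and we bound $|a(x)-a(y)| = a(y)-a(x)\leq a(y-x)=a(|x-y|)$ by sublinearity on $\R_+$ (both $x$ and $y$ nonnegative here). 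In all cases $|a(x)-a(y)|\leq 2a(|x-y|)$.

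For \emph{(ii)}, fix $z\geq 0$. The point $x+z$ lies in $\R_+$, but $y+z$ may still be negative (if $y<-z$) or nonnegative. I would apply \emph{(i)} with $x$ replaced by $x+z\geq 0$ and $y$ replaced by $y+z\in\R$, which immediately yields $|a(x+z)-a(y+z)|\leq 2a(|(x+z)-(y+z)|) = 2a(|x-y|)\leq 2a(2|x-y|)$ by monotonicity of $a$ on $\R_+$. So \emph{(ii)} is in fact a direct consequence of \emph{(i)} and the bound $2a(|x-y|)\leq 2a(2|x-y|)$; the factor $2$ inside is not tight but harmless, and it is presumably stated this way for uniformity with later applications. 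Alternatively, if one prefers to avoid re-invoking \emph{(i)} and wants to display the "concavity absorbs a shift" mechanism directly, one can observe that on $\R_+$ the map $t\mapsto a(t+z)-a(t)$ is nonincreasing (again by concavity), which gives the already-noted inequality $|a(x+z)-a(y+z)|\leq|a(x)-a(y)|$ \emph{when $y\geq 0$}, and then chain with \emph{(i)}; but since $y$ can be negative this shortcut does not cover all cases, so the clean route is genuinely to reduce \emph{(ii)} to \emph{(i)}.

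The main obstacle — really the only place requiring care — is the sign bookkeeping around $y<0$, where sublinearity on $\R_+$ is not directly available and one must instead split at $0$ and use monotonicity of $a$ on each of the intervals $[0,x]$ and the reflected interval, which is exactly where the constant $2$ (rather than $1$) enters. Everything else is a routine consequence of concavity, monotonicity, and $a(0)=0$, all of which are granted by Assumption \ref{ass:a} and the subsequent remark. I would therefore organize the write-up as: (a) recall sublinearity and monotonicity of $a$ on $\R_+$; (b) prove \emph{(i)} by the three-case split ($0\leq y\leq x$, $y<0\leq x$, $0\leq x<y$); (c) deduce \emph{(ii)} by substituting $(x+z,y+z)$ into \emph{(i)} and using monotonicity to pass from $|x-y|$ to $2|x-y|$.
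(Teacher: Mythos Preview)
Your argument is correct. For part (i) your three-case split is essentially the paper's own proof, with the same bookkeeping (the paper also splits according to the sign of $y$ and of $x-y$, and in the case $y<0$ bounds $a(x)$ and $|a(y)|=a(-y)$ separately by $a(|x-y|)$, picking up the factor $2$). One small point worth making explicit in your write-up: the step $a(0)-a(y)\le a(|y|)$ for $y<0$ is an \emph{equality} coming from the odd extension $a(-x)=-a(x)$ fixed in the construction of $a$; ``monotonicity'' alone does not give it.

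For part (ii) your route is genuinely different and cleaner than the paper's. The paper does \emph{not} reduce (ii) to (i); instead it runs a second three-case analysis on the sign of $y$ and of $y+z$, and it is only in the case $y+z<0$ that the inner factor $2$ is actually used (there one bounds $a(x+z)\le a(2|x-y|)$ because $x\le|x-y|$ and $z\le|y|\le|x-y|$). Your observation that $x+z\ge 0$ already puts the pair $(x+z,y+z)$ back into the scope of (i), yielding $|a(x+z)-a(y+z)|\le 2a(|x-y|)$ directly, is correct and in fact produces a sharper inequality than the one stated; the factor $2$ inside $a$ is then indeed ``harmless but not tight,'' exactly as you say. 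So your approach buys a shorter proof and a slightly stronger conclusion, while the paper's case analysis makes the geometry of the worst case visible but is otherwise redundant once (i) is in hand.
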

The proof of these facts is postponed to the Appendix. 

Item $vi)$ of Proposition \ref{prop:a} allows us to control  $| d_q(X^{N,1}_t) -d_q(\bar X^{N, 1}_t)|$ in terms of $ |a(X^{N,1}_t) -a(\bar X^{N, 1}_t) |,$ and this is what we are going to do in the sequel. We now present the main steps of the proof following the ideas of \cite{DEE}.   

\subsection{Rewriting the interaction  term of the finite particle system in terms of subordinators}\label{sec:coupling}
To prove Theorem \ref{strongconvergence}, we follow the same strategy as the one already used in \cite{DEE} and cut time into time slots of length $ \delta > 0.$ We will choose $ \delta = \delta (N )$ such that, as $N \to \infty, $ $\delta (N) \to 0$ and (at least) $ N \delta (N) \to \infty$ such that per time interval, the total jumping activity still tends to infinity. The precise choice will be given in \eqref{eq:delta} below. 

We write $\tau_s=k\delta$ for $k\delta < s \leq(k+1)\delta,\,  k\in\N, s > 0.$ The main step of the proof is to replace  the interaction term 
\begin{equation}\label{eq:atn}
A_{t}^{N}:=\frac 1{N^{1/\alpha}}\sum_{j=  1}^N\int_{[0,t]\times\R_+\times\R_+ }u \indiq_{ \{ z \le  f ( X^{N, j}_{s-}) \}} \pi^j (ds,dz,du)
\end{equation}
by its time-frozen version 
\begin{equation}\label{eq:atndiscreti}
A_{t}^{N, \delta}:=\frac 1{N^{1/\alpha}}\sum_{j=  1}^N\int_{[0,t]\times\R_+\times\R_+ }u \indiq_{ \{ z \le  f ( X^{N, j}_{\tau_s}) \}} \pi^j (ds,dz,du) .
\end{equation}
Freezing time in the jumping rate, that is, working with a jumping rate $f ( X^{N, i}_{\tau_t})$ which is constant on each time interval $ (k \delta, (k+1 ) \delta ] $ enables us to separate the randomness present in the jump heights (the variables $u$ in \eqref{eq:system}) from the randomness present in the acceptance-rejection scheme represented by $ \bar \pi^i .$ 

In what follows we give the main argument that leads to the construction of the limit stable subordinator $S^\alpha .$ 
We denote,  for any $ k \geq 0, $ 
$$P^{N, \delta}_{k , k+1}:= \sum_{j= 1}^N\int_{]k \delta,(k+1) \delta]\times \R_+}\indiq_{\{z\leq f(X^{N, j }_{k \delta }) \}} \bar \pi^j(dv,dz)$$
and
$$A^{N,\delta}_{k, k+1} := \sum_{j= 1 }^N\int_{]k \delta,(k+1) \delta]\times \R_+\times \R_+}\frac u{N^{1/\alpha}}\indiq_{\{z\leq f(X^{N, j }_{ k \delta })\}} \pi^j(dv,dz,du),$$
which is the increment of the time frozen process $A^{N, \delta} $ over one time interval $ (k \delta, (k+1 ) \delta ] .$ Then we have the following key result.

\begin{lem}\label{lem:rs}
For any fixed $ k \geq 0,$ there exists a random variable $ S^{\alpha , \delta}_{ k, k+1} $ defined on an extension of $( \Omega, {\mathcal A}, {\mathbf P} ),$ satisfying the following properties.\\
i) $ S^{\alpha, \delta }_{ k, k+1} \sim \nu.$\\
ii) $ S^{\alpha , \delta}_{ k, k+1}$ is
 independent of ${\mathcal F}_{k \delta},$ of $P^{N,\delta}_{k, k+1}$ and of $ (\bar \pi^i)_{i \geq 1 }.$\\
iii) We have the representation 
\begin{equation}\label{eq:representationbrute}
A^{N,\delta}_{k, k+1}=\left( \frac{P^{N,\delta}_{k, k+1}}{N}\right)^{ 1 / \alpha}  S^{\alpha , \delta}_{ k, k+1} .
\end{equation}
\end{lem}

\begin{proof}
Let 
\begin{equation}\label{eq:Yk}
  S^{\alpha, \delta}_{ k, k+1} :=(\frac N{P^{N,\delta}_{k, k+1}})^{1/\alpha} A^{N,\delta}_{k,k+1} \indiq_{\{P^{N, \delta}_{k , k+1}\neq 0\}}+ \bar Y \indiq_{\{P^{N, \delta}_{k , k+1}= 0\}},
  \end{equation}
where $\bar Y \sim \nu,$ independent of ${\mathcal F}_\infty .$
In what follows we show that  $  S^{\alpha, \delta}_{ k, k+1}$ follows indeed the law $\nu$ and that it is independent of $\F_{k \delta}, $ $ P^{N, \delta}_{k , k+1}$ and $ (\bar \pi^i)_{i \geq 1 }$.

To do so we propose a representation of $A^{N, \delta}_{k , k+1}$ in terms of the atoms of the Poisson random measures $ \pi^j.$ Using basic properties of Poisson random measures and the fact that $f$ is bounded, we only need to consider  $\pi^j_{| \R_+ \times [0, \| f\|_\infty ] \times \R_+}$ which can be represented as
$$\pi^j(]k \delta,(k+1) \delta ]\times A\times B)=\sum_{N_{k \delta}^j<n\leq N_{(k+1) \delta}^j}\indiq_A(Z_n^j)\indiq_B(U_n^j),$$ 
for any$ A , B \in {\mathcal B} (R_+), A \subset [0, \|f\|_\infty ].$ 
Here, $(N_s^j)_{s\geq 0}$ is a Poisson process with intensity $\|f\|_{\infty},$ and $(Z_n^j)_{n\geq 0},$ $(U_n^j)_{n\geq 0}$ are two independent i.i.d. sequences, both independent  of $(N_{s}^j)_s$, such that  $Z_n^j$ are  uniformly distributed on $[0, \|f\|_{\infty}]$ and $U_n^j\sim\nu$ on $\R_+.$   For different $j\geq 1,$ $(N_{s}^j)_s, (Z_n^j)_n, (U_n^j)_n$ are independent. 
With this definition  we can represent $A^{N, \delta}_{k \delta,(k+1) \delta} $ and $ P^{N, \delta}_{k \delta,(k+1) \delta} $ as
$$P^{N, \delta}_{k ,k+1 }=   \sum_{j=1}^N \sum_{N_{k \delta}^j<n\leq N_{(k+1)\delta}^j} \indiq_{\{Z_n^j\leq f(X^{N,j}_{k \delta})\}}\quad{a.s.} $$
and
$$ A^{N, \delta}_{k ,k+1} =  \frac{1}{{N}^{ 1/\alpha} }\sum_{ j =1}^N \sum_{N_{k \delta}^j<n\leq N_{(k+1) \delta}^j}U^j_n \indiq_{\{Z_n^j\leq f(X^{N,j}_{k \delta})\}}\quad a.s.. $$
Note that $P^{N, \delta}_{k ,k+1 }$ is the total number of non-zero terms in the sum defining $A^{N, \delta}_{k ,k+1} .$  
The fact that $  S^{\alpha, \delta}_{ k , k+1}\sim\nu$  follows from 
\begin{multline*}
\E \left (e^{-\lambda   S^{\alpha , \delta}_{ k , k+1  }} |\; \F_{k \delta}; N^j_{(k +1) \delta};\;  Z^j_n;\;  \ N_{k \delta }^j<n\leq N_{(k+1) \delta}^j, \; j=1,\ldots N\right)\indiq_{\{P^{N, \delta}_{k ,k+1}\neq 0\}}\\
=e^{-\lambda^{\alpha}}\indiq_{\{P^{N, \delta}_{k ,k+1}\neq 0\}}.
\end{multline*}
The above statement also shows that $ S^{\alpha , \delta}_{ k, k+1 }$ is independent of $\F_{k \delta}$ and $P^{N, \delta}_{k ,k+1}$.
Moreover, since for any $t\geq 0$ and $A\in\B(\R_+),$ $ A \subset [0, \|f\|_\infty ], $ 
 $$\bar \pi^j ([0,t]\times A) =\sum_{n\leq N_t^j}\indiq_A(Z_n^j),  $$
the same conditioning implies that $ S^{\alpha, \delta}_{  k , k+1}$ is independent of $\bar \pi^j .$ This concludes the proof.
\end{proof}

The above lemma shows how to use, on each time interval $ (k \delta, (k+1) \delta ], $  the jump heights $u$ to construct an increment $ S^{\alpha, \delta}_{ k , k+1}$ of the limit stable subordinator. Concatenating these increments, it is possible to construct a subordinator defined on the whole positive axis (this is the content of Proposition 4.3 of \cite{DEE}).  Using moreover the strong law of large numbers to replace the random and conditionally Poisson distributed factor $ \frac{P^{N,\delta}_{k, k+1}}{N}$ in \eqref{eq:representationbrute} by its intensity $ \sum_{j=1}^N f( X^{N, j}_{ k \delta }) \delta ,$ we have proved in \cite{DEE} the following theorem. 

\begin{thm}\label{theo:first}{(Theorem 4.1 of \cite{DEE})}
\begin{description}
\item {i)}
For each $N$ and $ \delta < 1,$ there exists a stable subordinator $ S^{N, \alpha}, $ defined on an extension of $ (\Omega, {\mathcal A}, {\mathbf P}) ,$  independent of $ \bar \pi^i , i \geq 1, $ and of $ X^{N, i }_0, i \geq 1, $ 
such that for any $N\in\N,$ 
\begin{equation*}
A_t^N = \int_0^t (\mu^N_{ \tau_s} (f) )^{ 1 / \alpha }  d S^{N, \alpha}_s + R^{N}_t,
\end{equation*} 
where the error term satisfies for all $ t \le T,$ 
\begin{equation*}
 \E ( 
 d_q ( | R_t^{N } |)  ) \le C_T \left( N^{ 2(1 - q/\alpha)} \delta+ (N \delta)^{ - q/2} \delta^{ q/\alpha - 1 }  +\delta^{q/\alpha} 
   \right) .
\end{equation*}

\item {ii)}
With $ S^{N, \alpha}  $ the stable subordinator of item i), we have for any $ 1 \le i \le N $ and any $ t \geq 0, $ 
\begin{multline*}
X^{N, i}_t =  X^{N,i}_0 +   \int_0^t  b(X^{N, i}_s , \mu_s^N)  ds +  \int_{[0,t]\times\R_+} \psi(X^{N, i}_{s-})   \indiq_{ \{ z \le  f ( X^{N, i}_{s-}) \}} \bar \pi^i (ds,dz) \\
+\int_0^t (\mu^N_{\tau_s} (f) )^{ 1 / \alpha }  d S^{N, \alpha}_s + \tilde R^{N, i }_t,
\end{multline*} 
where the error term satisfies for all $ t \le T,$ 
\begin{equation*}
 \E ( 
d_q( | \tilde  R_t^{N , i } |)  ) \le C_T \left( N^{ 2(1 - q/\alpha)} \delta+ (N \delta)^{ - q/2} \delta^{ q/\alpha - 1 }  +\delta^{q/\alpha}  
  + N^{ - q/\alpha}  \right) .
\end{equation*}
\end{description}
\end{thm}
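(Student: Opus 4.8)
The plan is to establish both items by cutting time into slots of length $\delta$ and, on each slot $[k\delta,(k+1)\delta[$, freezing the jump rates at their values at time $k\delta$ in order to apply Corollary \ref{cor:1}. First I would rewrite the interaction term $A_t^N$ in \eqref{eq:atn} slot by slot. On the slot $[k\delta,(k+1)\delta[$, the un-frozen quantity is $\frac{1}{N^{1/\alpha}}\sum_{j=1}^N\int_{]k\delta,(k+1)\delta]\times\R_+\times\R_+}u\indiq_{\{z\le f(X^{N,j}_{s-})\}}\pi^j(ds,dz,du)$; I would compare it to its frozen analogue in which $f(X^{N,j}_{s-})$ is replaced by $f(X^{N,j}_{k\delta})$. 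The discrepancy between the two is controlled using that $f$ is Lipschitz, that on a slot of length $\delta$ the positions $X^{N,j}$ move by at most $O(\delta)$ plus the jumps they make, and that the expected number of jumps of particle $j$ in the slot is $O(\delta)$; in $L^q$ this yields a contribution of order $N^{1-q/\alpha}\delta^{q/\alpha}$ after summing the $N$ independent renormalized stable increments and using \eqref{eq:strictlystable}. This is where the first term $N^{1-q/\alpha}\delta^{q/\alpha}$ in the error bound comes from.

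Next, for the frozen term on a slot, the total number of accepted jumps (over all particles) is, conditionally on $\F_{k\delta}$, a Poisson variable $P_k$ with parameter $\delta\sum_{i=1}^N f(X^{N,i}_{k\delta})=N\delta\,\mu^N_{k\delta}(f)$, and the associated marks are i.i.d.\ $\sim\nu$ and independent of $P_k$. Applying Corollary \ref{cor:1} with $\lambda=\mu^N_{k\delta}(f)$, I obtain on each slot a random variable $\tilde S^{\alpha}_{\delta,k}\sim S^\alpha_\delta$, independent of $P_k$ and of $\F_{k\delta}$, with $\frac{1}{N^{1/\alpha}}\sum_{\text{jumps in slot }k}Y = \bigl(\frac{P_k}{N\delta}\bigr)^{1/\alpha}\tilde S^\alpha_{\delta,k}$. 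I would then define $S^{N,\alpha}$ by concatenating the increments $\tilde S^\alpha_{\delta,k}$ (this is the extension of the probability space — one needs an auxiliary i.i.d.\ sequence to realize the $\tilde S^\alpha_{\delta,k}$ and the $\bar S^\alpha_\delta$ appearing on the event $\{P_k=0\}$), so that $S^{N,\alpha}$ is a genuine stable subordinator independent of the $\bar\pi^i$ and the $X_0^i$. The frozen interaction term on slot $k$ is then $(\mu^N_{k\delta}(f))^{1/\alpha}\bigl(\frac{P_k}{N\delta}\bigr)^{1/\alpha}\tilde S^\alpha_{\delta,k}$, which I compare to the target $(\mu^N_{k\delta}(f))^{1/\alpha}(\tilde S^\alpha_{\delta,k+k\delta}-\tilde S^\alpha_{k\delta})$, i.e.\ $\int_{k\delta}^{(k+1)\delta}(\mu^N_{\tau_s}(f))^{1/\alpha}dS^{N,\alpha}_s$. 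The difference involves the factor $\bigl(\frac{P_k}{N\delta}\bigr)^{1/\alpha}-1$; since $P_k\sim\mathrm{Poiss}(N\delta\mu^N_{k\delta}(f))$ with $\mu^N_{k\delta}(f)\ge\underline f$, the law of large numbers for Poisson variables together with the fact that $x\mapsto x^{1/\alpha}$ is locally Lipschitz away from $0$ gives, in the relevant $L^q$-sense after multiplying by $\tilde S^\alpha_{\delta,k}$ (whose $q$-th moment is $\delta^{q/\alpha}$) and summing over the $\lfloor t/\delta\rfloor$ slots, a contribution of order $(N\delta)^{-q/2}\delta^{q/\alpha-1}$. This is the source of the second error term. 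Summing the two sources of error over slots, using $q<1$ and that the slot-wise errors are, modulo conditioning, independent and mean-centered enough to add sub-additively in $L^q$, yields the bound in item i).

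For item ii), I would simply insert the representation of $A_t^N$ from item i) into the defining equation \eqref{eq:system} for $X^{N,i}_t$, noting that the collateral-jump part of \eqref{eq:system} is $A_t^N$ minus the self-term $\frac{1}{N^{1/\alpha}}\int_{[0,t]\times\R_+\times\R_+}u\indiq_{\{z\le f(X^{N,i}_{s-})\}}\pi^i(ds,dz,du)$ (since \eqref{eq:atn} sums over all $j$ including $j=i$, whereas \eqref{eq:system} sums over $j\ne i$). The self-term is a single renormalized compound-Poisson-type integral whose $L^q$-norm is $O(N^{-q/\alpha})$ by \eqref{eq:strictlystable} and the boundedness of $f$, since the number of jumps of particle $i$ up to time $T$ has all moments. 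Collecting this with the error $R^N_t$ from item i) gives $\tilde R^{N,i}_t = R^N_t - (\text{self-term})$ and hence the stated bound with the extra additive term $N^{-q/\alpha}$.

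The main obstacle I expect is the careful bookkeeping in the freezing step and the verification that the per-slot errors can genuinely be summed sub-additively rather than crudely by the triangle inequality (which would lose a factor $\delta^{-1}$ or more). Concretely, one must exploit that, conditionally on $\F_{k\delta}$, the slot-$k$ error terms are independent across $j$ (for the first source) and that the Poisson-fluctuation error, once centered, has small conditional $L^q$ or $L^2$ norm uniformly over slots; getting the powers of $\delta$ and $N$ exactly right — in particular the interplay between the $L^1$-control forced on the small jumps of $S^\alpha$ and the $L^q$-control on the big jumps, encoded through the function $a$ only later but already implicit in the choice $q<\alpha<1$ here — is the delicate part. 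A secondary technical point is making the extension of the probability space explicit and checking the required independence of $S^{N,\alpha}$ from $(\bar\pi^i)_i$ and $(X_0^i)_i$, which follows because the marks $Y_n$ and the auxiliary variables used in Corollary \ref{cor:1} are constructed independently of those objects.
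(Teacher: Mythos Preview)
Your approach is essentially the paper's: freeze rates on slots, represent the frozen sum via Corollary~\ref{cor:1}, replace the Poisson factor by its mean, and concatenate the resulting stable increments. Two points deserve correction, and your stated ``main obstacle'' is not one.

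First, your formula for the frozen slot contribution is garbled: it is $\bigl(\tfrac{P_k}{N\delta}\bigr)^{1/\alpha}\tilde S^\alpha_{\delta,k}$, not $(\mu^N_{k\delta}(f))^{1/\alpha}\bigl(\tfrac{P_k}{N\delta}\bigr)^{1/\alpha}\tilde S^\alpha_{\delta,k}$, and the relevant error factor is $\bigl(\tfrac{P_k}{N\delta}\bigr)^{1/\alpha}-(\mu^N_{k\delta}(f))^{1/\alpha}$, not $\bigl(\tfrac{P_k}{N\delta}\bigr)^{1/\alpha}-1$. More importantly, your worry about needing a clever sub-additive summation is misplaced: the paper simply uses $|x+y|^q\le|x|^q+|y|^q$ across the $\lfloor t/\delta\rfloor$ slots, and the resulting factor $\delta^{-1}$ is precisely what turns the per-slot bound $C(N\delta)^{-q/2}\delta^{q/\alpha}$ into the claimed $C_T(N\delta)^{-q/2}\delta^{q/\alpha-1}$. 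No centering or independence across slots is exploited here.

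Second, your freezing-error analysis is too casual. You write that ``positions move by at most $O(\delta)$ plus the jumps they make'' with ``expected number of jumps $O(\delta)$''. But on a slot each particle receives $O(N\delta)$ collateral kicks, each of order $N^{-1/\alpha}$ times a stable variable with no first moment; the displacement is \emph{not} $O(\delta)$ in any $L^p$. The paper isolates this as a separate lemma (Proposition~\ref{prop:discretis0}): for $a$-Lipschitz $h$, $\E|h(X^{N,i}_t)-h(X^{N,i}_s)|\le C|t-s|^{q/\alpha}$, proved by applying the stable-scaling trick (Lemma~\ref{lem:rs}) to the collateral part itself. Only then does the freezing error come out as $N^{1-q/\alpha}\delta^{q/\alpha}$ via sub-additivity of $|\cdot|^q$ inside the Poisson integral. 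Finally, a minor point: concatenating the increments $\tilde S^\alpha_{\delta,k}$ gives only a discrete-time process; the paper fills in subordinator bridges on each slot (Proposition~\ref{prop:concoct}) to obtain a genuine c\`adl\`ag $S^{N,\alpha}$.
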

\begin{proof}[Proof of Theorem \ref{theo:first}]
Item i) of the above theorem is a consequence of Theorem 4.1 in \cite{DEE}.   Concerning item ii), observe that
\begin{multline*}
X^{N, i}_t =  X^{N,i}_0 +   \int_0^t  b(X^{N, i}_s , \mu_s^N)  ds +  \int_{[0,t]\times\R_+} \psi(X^{N, i}_{s-})   \indiq_{ \{ z \le  f ( X^{N, i}_{s-}) \}} \bar \pi^i (ds,dz) \\
+ A_t^N - \frac{1}{N^{ 1/\alpha} } \int_{ [0, t ]  \times \R_+ \times \R_+ } u \indiq_{ \{ z \le  f ( X^{N, i}_{s-}) \}}  \pi^i (ds,dz, du ) ,
\end{multline*} 
such that, by sub-additivity of the function $d_q  $ and since $\int_{ [0, t ]  \times \R_+ \times \R_+ } u \indiq_{ \{ z \le  f ( X^{N, i}_{s-}) \}}  \pi^i (ds,dz, du )$ contains only an almost surely finite number of terms, the assertion follows from the fact that 
$$ d_q( | \tilde  R_t^{N , i } |)  \le  d_q( |   R_t^{N  } |) + N^{ - q/\alpha} \left|  \int_{ [0, t ]  \times \R_+ \times \R_+ } u \indiq_{ \{ z \le  f ( X^{N, i}_{s-}) \}}  \pi^i (ds,dz, du ) \right|^q  $$
together with the control 
$$  \left|  \int_{ [0, t ]  \times \R_+ \times \R_+ } u \indiq_{ \{ z \le  f ( X^{N, i}_{s-}) \}}  \pi^i (ds,dz, du ) \right|^q  
 \le  \int_{ [0, t ]  \times \R_+ \times \R_+ } |u|^q  \indiq_{ \{ z \le  f ( X^{N, i}_{s-}) \}}  \pi^i (ds,dz, du ) .
 $$
Taking expectation implies the assertion, since $ |u|^q \in L^1 ( \nu ) $ and since $ f$ is bounded. 
\end{proof} 
Notice that the representation of item ii) above is almost the one we are looking for, except that the integrand in the stochastic integral term is given  in its time-discretized version; that is, we have the term $ \int_0^t (\mu^N_{\tau_s} (f) )^{ 1 / \alpha }  d S^{N, \alpha}_s$ 
appearing instead of $\int_0^t (\mu^N_{ s-} (f) )^{ 1 / \alpha }  d S^{N, \alpha}_s.$ Moreover, we need to control the distance of the finite system to its associated limit by means of the function $d_q(\cdot) $ (or rather by means of the function $a$ introduced in \eqref{eq:a} and related to $d_q$ according to Proposition \ref{prop:a}). The following result therefore extends the representation of $ X_t^{N, i }, $ obtained in item ii) of Theorem \ref{theo:first}, to a representation of $ a ( X_t^{N, i }).$ 

\begin{thm}\label{theo:2main}
Grant Assumptions~\ref{ass:b},~\ref{ass:1} and \ref{ass:nu0bis}.  Let $M(ds, dx) $ be the jump measure of the stable subordinator $ S^{N, \alpha}$ of Theorem \ref{theo:first}. For any $ 1 \le i \le N, $ 
\begin{multline}\label{eq:axfinite}
a(X^{N, i}_t) =  a(X^{i}_0) +   \int_0^t  a' ( X^{N, i}_s) b(X^{N, i}_s , \mu_s^N)  ds \\
+  \int_{[0,t]\times\R_+}[  a(X^{N, i}_{s-} +  \psi(X^{N, i}_{s-})) - a(X^{N, i}_{s-}) ]   \indiq_{ \{ z \le  f ( X^{N, i}_{s-}) \}} \bar \pi^i (ds,dz) \\
+\int_{[0, t ]  \times \R_+} [a(X^{N, i}_{s-} + (\mu^N_{ s-} (f) )^{ 1 / \alpha }  x ) - a(X^{N, i}_{s-}) ]   M  ( ds, dx) + \bar R_t^{N,i} ,
\end{multline}
where 
$$ \E | \bar R_t^{N, i }|  \le C_t  \left(N^{ 2(1 - q/\alpha)} \delta + (N \delta)^{ - q/2} \delta^{ q/\alpha - 1 }  +\delta^{q/\alpha}   + N^{ - q/\alpha}  \right)  .$$
\end{thm}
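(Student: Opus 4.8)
The plan is to apply It\^o's formula to $a(X^{N,i}_t)$ using the representation of $X^{N,i}_t$ obtained in item ii) of Theorem \ref{theo:first}, namely
$$ X^{N,i}_t = X^{i}_0 + \int_0^t b(X^{N,i}_s,\mu^N_s)\,ds + \int_{[0,t]\times\R_+} \psi(X^{N,i}_{s-})\indiq_{\{z\le f(X^{N,i}_{s-})\}}\bar\pi^i(ds,dz) + \int_0^t (\mu^N_{\tau(s)}(f))^{1/\alpha} dS^{N,\alpha}_s + \tilde R^{N,i}_t . $$
The difficulty is that the driving decomposition contains the nondifferentiable-in-general error term $\tilde R^{N,i}_t$; but $\tilde R^{N,i}$ is itself built from the primitive increments of the Poisson measures $\pi^j$ and of $S^{N,\alpha}$, so the finite-variation/pure-jump structure is preserved. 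First I would write $X^{N,i}$ as a pure jump-plus-drift process directly from the SDE \eqref{eq:system} and apply It\^o to $a(\cdot)$ there, obtaining $a(X^{N,i}_t)=a(X^i_0)+\int_0^t a'(X^{N,i}_s)b(X^{N,i}_s,\mu^N_s)ds + (\text{own jump term}) + (\text{collateral jump term})$, where the own jump term is exactly $\int_{[0,t]\times\R_+}[a(X^{N,i}_{s-}+\psi(X^{N,i}_{s-}))-a(X^{N,i}_{s-})]\indiq_{\{z\le f(X^{N,i}_{s-})\}}\bar\pi^i(ds,dz)$ (note $a$ applied to a sum of a jump produces exactly this increment since jumps of $X^{N,i}$ of its own type are the $\psi$-jumps), and the collateral jump term is
$$ \sum_{j\ne i}\int_{[0,t]\times\R_+\times\R_+}\Big[a\big(X^{N,i}_{s-}+\tfrac{u}{N^{1/\alpha}}\big)-a(X^{N,i}_{s-})\Big]\indiq_{\{z\le f(X^{N,j}_{s-})\}}\pi^j(ds,dz,du). $$

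The core of the proof is then to show that this collateral jump term equals $\int_{[0,t]\times\R_+}[a(X^{N,i}_{s-}+(\mu^N_{s-}(f))^{1/\alpha}x)-a(X^{N,i}_{s-})]M(ds,dx)$ up to an error of the announced size. Here the strategy is to pass through exactly the same three-step chain of approximations used in Section \ref{sec:coupling} for the term $A^N_t$, but now applied to the $a$-increments rather than to the raw sum: (Step 1) replace $f(X^{N,j}_{s-})$ by the frozen $f(X^{N,j}_{\tau_s})$, controlled via Proposition \ref{prop:discretis0} together with the $a$-Lipschitz property of $x\mapsto[a(x+c y)-a(x)]$ uniformly in the relevant parameters and $|u|^q$-integrability of $\nu$; (Step 2) on each slot $]k\delta,(k+1)\delta]$ rewrite the contribution as a function of $(P^{N,f}_{k\delta,(k+1)\delta}, S^{N,\alpha}_{k\delta,(k+1)\delta})$ using the conditional scaling Lemma \ref{lem:rs} / Proposition \ref{prop:increment} — here the key point is that $a(X^{N,i}_{s-}+\tfrac{1}{N^{1/\alpha}}\sum_{k\le P} Y_k) = a(X^{N,i}_{s-}+(P/(N\delta))^{1/\alpha}S^{N,\alpha}_{\delta\text{-increment}})$ by Proposition \ref{prop:scaling}; (Step 3) replace $(P^{N,f}_{k\delta,(k+1)\delta}/(N\delta))^{1/\alpha}$ by $(\mu^N_{k\delta}(f))^{1/\alpha}$, using the concavity/Lipschitz bounds on $a$ together with the Poisson variance estimate $\mathrm{Var}(P^{N,f}_{k\delta,(k+1)\delta}/(\delta N)|\mathcal F_{k\delta})\le C/(\delta N)$ and the exponential deviation bound, exactly as in the proof of Proposition \ref{cor:45}. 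A further de-discretization step replaces $\mu^N_{\tau_s}(f)$ inside $M(ds,dx)$ by $\mu^N_{s-}(f)$; this costs another $C_t\delta^{q/\alpha}$-type term from Proposition \ref{prop:discretis0} applied to $s\mapsto f(X^{N,j}_s)$ and the $m$-integrability $\int_0^\infty (x\wedge x^q)m(dx)<\infty$ that controls the $a$-increments. Finally there is the cost $N^{-q/\alpha}$ of including the diagonal term $j=i$ in the collateral sum, exactly as explained at the end of the proof of Theorem \ref{theo:first}.

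Collecting all these errors into $\bar R^{N,i}_t$ and bounding their $L^1$-norms — using $|a(\text{jump increment})|\le C(x\wedge x^q)$ so that the small-jump contributions are handled in $L^1$ and the big-jump contributions via the $L^q$ estimates already established for $\tilde R^{N,i}$ — gives
$$ \E|\bar R^{N,i}_t| \le C_t\big(N^{1-q/\alpha}\delta^{q/\alpha} + (N\delta)^{-q/2}\delta^{q/\alpha-1} + \delta^{q/\alpha} + N^{-q/\alpha}\big), $$
as claimed. I expect the main obstacle to be the careful bookkeeping in Step 3: controlling the $a$-increment $|a(X^{N,i}_{s-}+c_1 x)-a(X^{N,i}_{s-})-a(X^{N,i}_{s-}+c_2 x)+a(X^{N,i}_{s-})|$ with $c_1=(P/(N\delta))^{1/\alpha}$, $c_2=(\mu^N_{k\delta}(f))^{1/\alpha}$, splitting into $x\le1$ (where Taylor's formula and $|a'(x)-a'(y)|\le C|a(x)-a(y)|$ give a factor $|c_1-c_2|$, integrated against $\int_0^1 x\,m(dx)<\infty$) versus $x>1$ (where $a(y)=c+y^q$ for $y\ge1$ lets one extract $|c_1^{q/\alpha}-c_2^{q/\alpha}|$ on the range where $c_i x\ge1$, with the bounded-below $f$ crucial here, plus a finite-mass remainder on $]1,\underline f^{-1/\alpha}[$) — this is structurally identical to the handling of $I^1_t, I^2_t$ in the proof of Theorem \ref{theo:uniqueness}, and to the treatment of $E^{N,\delta}_k$ in Proposition \ref{cor:45}, so I would carry it out in the same way and refer back to those computations rather than repeat them in full.
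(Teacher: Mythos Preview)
Your Step~2 has a real gap. After applying It\^o to $a(X^{N,i}_t)$ from the original SDE, the collateral contribution on a slot $]k\delta,(k+1)\delta]$ is the sum of \emph{individual} $a$-increments
\[
\sum_{j\neq i}\int_{]k\delta,(k+1)\delta]\times\R_+\times\R_+}\bigl[a(X^{N,i}_{s-}+\tfrac{u}{N^{1/\alpha}})-a(X^{N,i}_{s-})\bigr]\indiq_{\{z\le f(X^{N,j}_{\tau_s})\}}\pi^j(ds,dz,du),
\]
not a single increment of the form $a(X^{N,i}_{s-}+\tfrac{1}{N^{1/\alpha}}\sum_{k\le P}Y_k)-a(X^{N,i}_{s-})$. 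The identity you invoke in Step~2 is correct \emph{for the argument inside $a$}, but it does not by itself convert that It\^o sum into an aggregate increment. To bridge the two you would need a telescoping argument, which only works exactly if the base point $X^{N,i}_{s-}$ moves \emph{only} via the collateral jumps between two of them; in reality it also drifts and may receive an own jump. Even granting the aggregate, a second obstacle appears: the target expression is an integral against $M(ds,dx)$, hence a sum over the jump times of $S^{N,\alpha}$ with base $X^{N,i}_{s-}$ at \emph{those} times --- but $X^{N,i}$ jumps at $\pi^j$-times, not at $M$-times (recall that within each slot $S^{N,\alpha}$ is a subordinator bridge with no pathwise correspondence to the $\pi^j$-atoms). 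So a reverse de-aggregation with mismatched jump times would be required. None of this is in your proposal, and it is not clear it can be done within the announced error budget; the paper in fact remarks, right after the statement, that a direct It\^o approach on $a(X^{N,i})$ is problematic.

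The paper bypasses all of this with a different, cleaner device. Using Theorem~\ref{theo:first} it writes $X^{N,i}_t=Y^{N,i}_t+\tilde R^{N,i}_t$, where $Y^{N,i}$ is driven by $\bar\pi^i$ and by $S^{N,\alpha}$ (via $\int_0^t(\mu^N_{\tau_s}(f))^{1/\alpha}dS^{N,\alpha}_s$). It then applies It\^o to $a(Y^{N,i}_t)$, which \emph{directly} produces the $M(ds,dx)$-integral with base point $Y^{N,i}_{s-}$. The passage $Y\to X$ is then made in two places: first $|a(X^{N,i}_t)-a(Y^{N,i}_t)|\le C|\tilde R^{N,i}_t|^q$ by Lemma~\ref{lem:useful}, and second every integrand $Y^{N,i}_{s-}$ is replaced by $X^{N,i}_{s-}$ at the same cost, controlled by the $I^1/I^2$-type arguments already used in the proof of Theorem~\ref{theo:uniqueness}. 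Finally $\mu^N_{\tau_s}(f)$ is replaced by $\mu^N_{s-}(f)$ via Proposition~\ref{prop:discretis0}. This ``global'' route never has to match $\pi^j$-jump times to $M$-jump times, and the entire error is expressed in terms of $\E|\tilde R^{N,i}|^q$, which Theorem~\ref{theo:first} has already bounded.
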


The proof of the above theorem is given in Subsection \ref{sec:24} below. 

\begin{rem}
Before proceeding with the proof, let us make the following observation. Of course we could directly apply It\^o's formula to $ a(X^{N, i }_t). $ The interaction term would then read as
$$ \sum_{j \neq i } \int_{\R_+ \times \R_+ \times \R_+ }[ a ( X^{N, i }_{t-} + \frac{u}{N^{1/\alpha}}) - a ( X^{N, i }_{t-}) ] \indiq_{ \{ z \le  f ( X^{N, j}_{t-}) \}} \pi^j ( dt, dz, du ).$$  
It is however not so evident how to treat this term. Applying Taylor's formula gives 
$$  a ( X^{N, i }_{t-} + \frac{u}{N^{1/\alpha}}) - a ( X^{N, i }_{t-})  = a'   ( X^{N, i }_{t-} ) \frac{u}{N^{1/\alpha}} + \mbox{ some remainder terms of order } \frac{u^2}{N^{2 /\alpha } }.$$ 
However, $ u $ follows a stable law and does not possess any moment such that a control of such remainder terms does not seem to be feasible. It is for this reason that we propose a ``global'' approach in the proof given below that relies on the representation of the cumulated influence of the interaction terms obtained in item i) of Theorem \ref{theo:first} above. 
\end{rem}

\subsection{A useful technical result for the proof of Theorem \ref{theo:2main}}\label{eq:sectechique}
The proof of Theorem \ref{theo:2main} relies on a technical lemma that we 
state in this subsection for generic processes $X $ and $ \tilde X $ with associated measures $ \mu $ and $ \tilde \mu $ which can be either the associated empirical distributions or the respective conditional laws given $ S^{N, \alpha }.$ 
In the sequel, $ M $ will always denote the Poisson random measure associated to the jumps of $S^{N, \alpha}  .$ We suppose that all assumptions imposed in Theorem \ref{strongconvergence} are satisfied. 

 \begin{lem}\label{lem:it}
Introduce the process $ I_t (X, \tilde X, \mu , \tilde \mu) = I_t $ given by 
$$
I_t := \int_{[0, t ] \times \R_+  } [  a ( \tilde X_{s-} + \tilde \mu^{1/ \alpha}_{s-}(f) z) - a ( \tilde X_{s-} )] -  [  a ( X_{s-} + \mu^{1/ \alpha}_{s-}(f)z ) - a ( X_{s-} )]   M ( ds, dz) .
$$ 
If both processes $ X$ and $ \tilde X$ take non-negative values, then we have
\begin{equation}\label{eq:samex2}
 \E  |I_t (X, \tilde X, \mu , \tilde \mu) |  \le C \E \int_0^t \left( \left| a(\tilde X_s  ) - a(X_s )\right| +  | \tilde \mu_{s}(f)    -  \mu_{s}(f) |\right) ds .
\end{equation} 
If only one of the two processes takes non-negative values, then 
\begin{equation}\label{eq:samex2bis}
 \E  |I_t (X, \tilde X, \mu , \tilde \mu) |  \le C \E \int_0^t \left( \left| a(2 | \tilde X_s -X_s| )\right| +  | \tilde \mu_{s}(f)    -  \mu_{s}(f) |\right) ds .
\end{equation} 
\end{lem}

\begin{rem}
The fact that this lemma holds is the main difference with respect to \cite{DEE} where we had to truncate big jumps of the driving L\'evy process. 
\end{rem}

\begin{proof} 
We first give the proof in case $ X_t , \tilde X_t \geq 0 $ for all $ t. $ Recall that we have that $a(y)=c+ y^q$ if $y \geq 1.$ Moreover, $f$ is lowerbounded by $ \underline f$ such that the arguments $ \tilde \mu_{s-}(f)^{1/\alpha}z $ and $  \mu_{s-}(f)^{1/\alpha}z $ are both lower bounded by $1$ if $ z \geq  \underline f^{- 1/\alpha } . $ We therefore cut $I_t $ into two parts, a first one, $ I_t^{1}, $ corresponding to small jumps $ z \in ] 0, \underline f^{- 1/\alpha } [,$ and a second one, $ I_t^{ 2}, $ corresponding to big jumps $ z \geq   \underline f^{- 1/\alpha }.$ 
We then rewrite 
\begin{multline}\label{eq:scenario1cite}
        \E[ | I^1_t|]  \le \\
         \E\left[ \int_{[0,t]\times ]0, \underline f^{- 1/\alpha } [ } \left| a(\tilde X_{s} + \tilde \mu^{1/\alpha}_{s}(f) z) - a(\tilde X_{s})\right.\right.
          \left.\left. - \left( a(X_{s} + \mu^{1/\alpha}_{s}(f) z) - a(X_{s})\right)\right|  \nu^\alpha(dz)ds \right]\\
         = \E\left[  \int_{[0,t]\times ]0, \underline f^{- 1/\alpha } [  }  \Big|\int_0^1 \left( a'(\tilde X_s + u \tilde \mu^{1/\alpha}_{s}(f) z ) \tilde \mu^{1/\alpha}_{s}(f)  \right.\right. \\
         \quad \quad  \left.  \left.- 
        a'(X_s + u (\mu^{1/\alpha}_{s}(f) z ) \mu^{1/\alpha}_{s}(f) \right) du
        \Big| z \nu^\alpha(dz) ds\right] .
\end{multline}
Using the boundedness of $ f $ and of $ a', $  this can be upper bounded by 
\begin{multline}\label{eq:ashiftprime} 
C\E\Biggl[ \int_{[0,t]\times ]0, \underline f^{- 1/\alpha } [  \times [0,1]}   \biggl[ \left| a'(\tilde X_s + u \tilde \mu^{1/\alpha}_{s}(f) z )  -  a'( \tilde X_s + u \mu^{1/\alpha}_{s}(f) z ) \right| \Bigr. \Bigr.  \\
\Bigl. \Bigl. + \left| a'(\tilde X_s + u \mu^{1/\alpha}_{s}(f) z ) - a'(X_s + u \mu^{1/\alpha}_{s}(f) z )\right|  \Bigr. \Bigr.  \\
\Bigl. \Bigl.+ | \tilde \mu^{1/\alpha}_{s}(f) - \mu^{1/\alpha}_{s}(f) | \biggr]  du z  \nu^\alpha(dz)ds \Biggr] .
\end{multline}
Here, in the first term we use that $ a'$ is Lipschitz and that $ uz  \le C $ on $ \{ z \le   \underline f^{- 1/\alpha }   \} $ to upper bound 
\[
 \left| a'(\tilde X_s + u \tilde \mu^{1/\alpha}_{s}(f) z )  -  a'( \tilde X_s + u \mu^{1/\alpha}_{s}(f) z ) \right| \le C | \tilde \mu^{1/\alpha}_{s}(f)  -  \mu^{1/\alpha}_{s}(f)|  
 \le C   | \tilde \mu_{s}(f))    -  \mu_{s}(f) | ,
\]
where we have also used \eqref{eq:controlmuf}. 

To deal with the second term in \eqref{eq:ashiftprime}, recall that $ |a' ( x) - a'(y ) | \le C |a(x) - a(y ) |,   $ such that  
$$ \left| a'(\tilde X_s + u \mu^{1/\alpha}_{s}(f) z ) - a'(X_s + u \mu^{1/\alpha}_{s}(f) z )\right|  \le C   \left| a(\tilde X_s + u \mu^{1/\alpha}_{s}(f) z ) - a(X_s + u \mu^{1/\alpha}_{s}(f) z )\right|.$$
Now, using that $a$ is concave and   $ u \mu^{1/\alpha}_{s}(f) z > 0, $ we have 
$$  \left| a(\tilde X_s + u \mu^{1/\alpha}_{s}(f) z ) - a(X_s + u \mu^{1/\alpha}_{s}(f) z )\right| \le \left| a(\tilde X_s  ) - a(X_s )\right|.$$ 
Since $ z $ is integrable on $ ]0, \underline f^{- 1/\alpha } [  ,$ we conclude that 
$$  \E[ | I^1_t|] \le C  \E \int_0^t  \left( \left| a(\tilde X_s  ) - a(X_s )\right| +  | \tilde \mu_{s}(f))    -  \mu_{s}(f) | \right) ds .$$   
We now turn to the study of $ I_t^2.$  
We have
\begin{align*}
|I_t^{ 2}| \le \int_{[0,t]\times] \underline f^{- 1/\alpha },\infty[}\left| a\left (\tilde  X_{s-}+\tilde \mu_{s-}(f)^{1/\alpha}z\right)-a\left( X_{s-}+\mu_{s-}(f)^{1/\alpha}z\right) \right| M(ds,dz)\\
+  \int_{[0,t]\times] \underline f^{- 1/\alpha },\infty[} \left| a\left(\tilde X_{s-}\right)-a\left( X_{s-}\right)\right| M(ds,dz).
\end{align*}
Observe that the total mass of $M( [0,t]\times] \underline f^{- 1/\alpha },\infty[) $ is finite almost surely and that $ m ( ] \underline f^{- 1/\alpha },\infty[ ) < \infty,$ where $ ds m ( dz)$ denotes the intensity measure of $M ( ds, dz).$ Therefore, we only have to consider the first term in the above expression. We start from 
\begin{multline}\label{eq:multfirststep}
\left| a\left (\tilde  X_{s-}+ \tilde \mu_{s-}(f)^{1/\alpha}z\right)-a\left( X_{s-}+\mu_{s-}(f)^{1/\alpha}z\right) \right|   \\
\le \left| a\left (\tilde  X_{s-}+\tilde \mu_{s-}(f)^{1/\alpha}z\right)-a\left(\tilde  X_{s-}+\mu_{s-}(f)^{1/\alpha}z\right) \right| \\
 \quad \quad + 
\left| a\left (\tilde X_{s-}+ \mu_{s-}(f)^{1/\alpha}z\right)-a\left(X_{s-}+ \mu_{s-}(f)^{1/\alpha}z\right) \right| .
\end{multline}
Using again the concavity of $a$ and the fact that all arguments appearing in the function $a$ are positive, this is in turn upper bounded by 
$$
 \left| a\left (\tilde \mu_{s-}(f)^{1/\alpha}z\right)-a\left( \mu_{s-}(f)^{1/\alpha}z\right) \right|  + 
\left| a\left (\tilde  X_{s-}\right)-a\left( X_{s-}\right) \right|.
$$
Since the arguments $ \tilde \mu_{s-}(f)^{1/\alpha}x $ and $  \mu_{s-}(f)^{1/\alpha}x$ are both lower bounded by $1$ and $ a(x) = c + x^q $ for all $ x \geq 1,$ we deduce that 
\begin{multline}\label{eq:it2}
|I_t^{ 2} |\leq \int_{[0,t]\times] \underline f^{- 1/\alpha },\infty[} \left| a\left(\tilde X_{s-}\right)-a\left( X_{s-}\right)\right| M(ds,dz) \\
+ \int_{[0,t]\times [ \underline f^{-1/\alpha},\infty[}z^q\left |\tilde\mu_{s-}(f)^{q/\alpha}- \mu_{s-}(f)^{q/\alpha}     \right | M(ds,dz).
\end{multline}
Notice that the mapping $y\mapsto y^{q/\alpha}$ is Lipschitz on $ [ \underline f , \infty [.$ Using the fact that $ z^q  $ is integrable with respect to $ m ( dz) $  on $ [\underline f^{-1/\alpha}, \infty [ ,$  and the compensation formula for random measures, we finally obtain
\begin{equation*}
\E |I_t^{ 2}| \le C E \int_0^t   \left( \left| a(\tilde X_s  ) - a(X_s )\right| +  | \tilde \mu_{s}(f))    -  \mu_{s}(f) | \right) ds ,
\end{equation*}
which allows us to conclude.

We finally deal with the case when one of the two processes does not necessarily stay positive for all times. Suppose w.l.o.g. that it is $ \tilde X$ staying positive for all times. Coming back to \eqref{eq:multfirststep}, we may still upper bound 
$$ \left| a\left (\tilde  X_{s-}+\tilde \mu_{s-}(f)^{1/\alpha}z\right)-a\left(\tilde  X_{s-}+\mu_{s-}(f)^{1/\alpha}z\right) \right| \le \left| a\left (\tilde \mu_{s-}(f)^{1/\alpha}x\right)-a\left( \mu_{s-}(f)^{1/\alpha}x\right) \right|  ,$$
which is then treated as before. Moreover, using the second item of \eqref{eq:goodtoknow}, we have 
$$ \left| a\left (\tilde X_{s-}+ \mu_{s-}(f)^{1/\alpha}z\right)-a\left(X_{s-}+ \mu_{s-}(f)^{1/\alpha}z\right) \right| \le 2 a( 2 |\tilde X_{s-}-X_{s-}|) .$$ 
Since, thanks to the first item of \eqref{eq:goodtoknow},  $ \left| a\left (\tilde  X_{s-}\right)-a\left( X_{s-}\right) \right| \le 2 a ( |\tilde X_{s-}-X_{s-}|) ) \le 2 a( 2 |\tilde X_{s-}-X_{s-}|), $ ($a$ being non-decreasing), this allows to conclude. 
\end{proof}

\begin{rem}
In the above proof, we have used several times that jumps and the state space (of at least one of the processes) is positive, and that $a$ is concave on $ \R_+.$ Therefore, the above arguments do not apply in a general framework where particles might take negative values. 
\end{rem}

\subsection{Proof of Theorem \ref{theo:2main}}\label{sec:24}

Theorem \ref{theo:first} allows us to rewrite 

\begin{equation}\label{eq:difXY}
 X_t^{N, i } = Y^{N, i }_t + \tilde  R^{N, i }_t, 
 \end{equation} 
with 
\begin{multline*}
Y^{N, i }_t=  X^{N,i}_0 +   \int_0^t  b(X^{N, i}_s , \mu_s^N)  ds +  \int_{[0,t]\times\R_+} \psi(X^{N, i}_{s-})   \indiq_{ \{ z \le  f ( X^{N, i}_{s-}) \}} \bar \pi^i (ds,dz) \\
+\int_0^t (\mu^N_{ \tau_s} (f) )^{ 1 / \alpha } d S^{N, \alpha}_s .
\end{multline*}
Notice that while by construction $ X_t^{N, i} \geq 0, $ it might happen that $Y_t^{N, i } \le 0.$

{\bf Step 1.} Applying \eqref{eq:goodtoknow} to $ x := X_t^{N, i } $ and $y := Y_t^{N, i } ,$  we have that  
\begin{equation}\label{eq:difaXaY}
 | a ( X_t^{N, i } ) -  a ( Y_t^{N, i } ) |\le 2  a( | \tilde  R_t^{N, i }|)  \le C  d_q (| \tilde  R_t^{N, i }|) .
 \end{equation} 

{\bf Step 2.} We now apply It\^o's formula to $ a (Y_t^{N, i } ) $ and obtain 
\begin{multline}
a(Y^{N, i}_t) =  a(X^{N,i}_0) +   \int_0^t  a' ( Y^{N, i}_s) b(X^{N, i}_s , \mu_s^N)  ds \\
+  \int_{[0,t]\times\R_+}[  a(Y^{N, i}_{s-} +  \psi(X^{N, i}_{s-})) - a(Y^{N, i}_{s-}) ]   \indiq_{ \{ z \le  f ( X^{N, i}_{s-}) \}} \bar \pi^i (ds,dz) \\
+\int_{[0, t ]  \times \R_+} [a(Y^{N, i}_{s-} + (\mu^N_{ \tau_s} (f) )^{ 1 / \alpha }  x ) - a(Y^{N, i}_{s-}) ]   M  ( ds, dx)  .
\end{multline}

Using  \eqref {eq:difXY} this implies 
\begin{multline}
a(Y^{N, i}_t) =  a(X^{N,i}_0) +   \int_0^t  a' ( X^{N, i}_s) b(X^{N, i}_s , \mu_s^N)  ds \\
+  \int_{[0,t]\times\R_+}[  a(X^{N, i}_{s-} +  \psi(X^{N, i}_{s-})) - a(X^{N, i}_{s-}) ]   \indiq_{ \{ z \le  f ( X^{N, i}_{s-}) \}} \bar \pi^i (ds,dz) \\
+\int_{[0, t ]  \times \R_+} [a(X^{N, i}_{s-} + (\mu^N_{ s-} (f) )^{ 1 / \alpha }   x ) - a(X^{N, i}_{s-}) ]   M  ( ds, dx)   + B^N_t + \Psi^N_t + I^N_t,
\end{multline}
where $  B^N_t ,  \Psi^N_t,  I^N_t  $ are three error terms given by 
\begin{eqnarray*}
  B^N_t & = &  \int_0^t  [a' ( Y^{N, i}_s) -  a' ( X^{N, i}_s)] b(X^{N, i}_s , \mu_s^N)  ds ,\\
 \Psi^N_t &=&  \int_{[0,t]\times\R_+}[  (a(Y^{N, i}_{s-} +  \psi(X^{N, i}_{s-})) - a(Y^{N, i}_{s-})) - ( a(X^{N, i}_{s-} +  \psi(X^{N, i}_{s-})) - a(X^{N, i}_{s-}) )  ]  \\
 && \quad \quad \quad \quad  \indiq_{ \{ z \le  f ( X^{N, i}_{s-}) \}} \bar \pi^i (ds,dz) , \\
 I^N_t &=& I_t^N ( X^{N, i } , Y^{N, i }, (\mu^N_{t-})_t, (\mu^N_{\tau_t })_t) ,
 \end{eqnarray*}
where we used the notation of Lemma \ref{lem:it}. 

The first error term is controlled by 
$$|B^N_t| \leq  \int_0^t |  a' ( X^{N, i}_s) - a' ( Y^{N, i}_s) | | b(X^{N, i}_s , \mu_s^N)| ds\le C \int_0^t | a( X^{N, i}_s) - a( Y^{N, i}_s) |  ds , $$
since $b$ is bounded, due to the properties of the function $a.$  But using \eqref{eq:difaXaY}, 
$$ \int_0^t | a( X^{N, i}_s) - a( Y^{N, i}_s) |  ds \leq C \int_0^t d_q( | \tilde  R_s^{N,i } |)   ds .$$
  
Moreover, using \eqref{eq:samex2bis} of Lemma \ref{lem:it} and the fact that $ a( 2 | X^{N, i}_s -  Y^{N, i}_s|) \le C d_q(| \tilde  R_{s}^{N, i }|)  $ (where we have used that $ a (x) \le C d_q ( x) $ for any $ x \geq 0$), we obtain that 
$$ \E | I_t^N| \le C \int_0^t \E \left( d_q( | \tilde  R_{s}^{N, i }|) +   |\mu^N_{ \tau_s} (f) - \mu^N_{ s} (f)    |\right)  ds . $$ 

The second error  term $\Psi^N_t$ can be controlled similarly, since $f$ and $ \psi$ are bounded and $a-$Lipschitz. Resuming the above discussion, we obtain 
$$
\E \left( | B_t^N| + |\Psi_t^N| + |I_t^N|\right)  \le C  \int_0^t  \E \left( d_q (| \tilde R_s^{N,i } |)\right) ds + C \int_0^t \E \left( |\mu^N_{ \tau_s} (f) - \mu^N_{ s} (f)    | \right)ds .
$$
Finally, we use that, by exchangeability, for any fixed $i,$ 
$$ \E \left( |\mu^N_{ \tau_s} (f) - \mu^N_{ s} (f)    | \right) \le \E \left( | f ( X^{N, i }_{\tau_s } - f( X^{N, i }_{s }| \right) \le C N^{ 1 - q/\alpha } \delta, $$
where the last inequality is a consequence of Proposition 4.4 in \cite{DEE}. 
Gathering the errors of Steps {\bf 1.} and {\bf 2.} and recalling Theorem \ref{theo:first}, we then obtain the result. 
$\qed$

\section{A mean field version of the limit system}\label{sec:4}
Up to an error term, the representation obtained in \eqref{eq:axfinite} is a mean field version of the limit system, that is, of the system, 
given for any fixed $N,$ by 
\begin{multline}\label{eq:limitsystembis}
\tilde X^{N,i}_t =  X^{i}_0 +   \int_0^t  b(\tilde X^{N, i}_s , \tilde \mu^N_s )  ds +  \int_{[0,t]\times\R_+ } \psi (\tilde X^{ N,i}_{s-} )  \indiq_{ \{ z \le  f ( \tilde X^{N, i}_{s-}) \}} \bar \pi^i (ds,dz) \\
+ \int_{[0,t] } \left( \tilde \mu^N_{s-}(f)  \right)^{1/ \alpha}   d S_s^{N, \alpha} , 1 \le i \le N, 
\end{multline} 
where $ \tilde \mu_s^N = \frac1N \sum_{i=1}^N \delta_{\tilde X^{N,i}_s} .$  Comparing the finite system with this mean field system, we obtain
\begin{prop}\label{prop:51}
Grant Assumptions~\ref{ass:b}, ~\ref{ass:1} and \ref{ass:nu0bis}.
Then for all $t \le T,$ 
$$  \E \left( |a( X_t^{N, i } )- a(\tilde X_t^{N, i } )|  \right) \le C_T \left( N^{ 2(1 - q/\alpha)} \delta  + (N \delta)^{ - q/2} \delta^{ q/\alpha - 1 } + \delta^{q /\alpha}  
  + N^{ - q/\alpha} \right) .$$
\end{prop}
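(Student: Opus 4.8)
The plan is to follow the scheme of the proof of Theorem~\ref{theo:uniqueness}, but comparing the representation of $a(X^{N,i})$ provided by Theorem~\ref{theo:2main} with the It\^o expansion of $a(\tilde X^{N,i})$ obtained from \eqref{eq:limitsystembis}, and dragging along the extra error term $\bar R^{N,i}$. By exchangeability it suffices to treat $i=1$. Set $u(t):=\E(|a(X^{N,1}_t)-a(\tilde X^{N,1}_t)|)$; the a priori bound \eqref{eq:apriorifinite} together with its analogue for \eqref{eq:limitsystembis} (established exactly as \eqref{controllimite}, since the mean field system has bounded coefficients) guarantees that $u(t)<\infty$ for every $t$, so a Gronwall argument is legitimate.

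First I would subtract \eqref{eq:axfinite} from the It\^o expansion of $a(\tilde X^{N,1}_t)$ driven by \eqref{eq:limitsystembis}. Since both representations use the same jump measure $M(ds,dx)$ of $S^{N,\alpha}$, the same Poisson measure $\bar\pi^1$ and the same initial value $X^1_0$, we obtain
\[
a(X^{N,1}_t)-a(\tilde X^{N,1}_t)=\bar R^{N,1}_t+B_t+\Psi_t+I_t,
\]
where $B_t$, $\Psi_t$, $I_t$ (drift term, big-jump term, stable-subordinator term) have exactly the structure of the corresponding terms in the proof of Theorem~\ref{theo:uniqueness}, with $(\bar X,\tilde X,\bar\mu,\tilde\mu)$ there replaced by $(X^{N,1},\tilde X^{N,1},\mu^N,\tilde\mu^N)$ here.

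Next I would bound $B_t$, $\Psi_t$, $I_t$ by repeating, essentially verbatim, the estimates of that proof: concavity and Lipschitz continuity of $a$, boundedness of $b$, the $a$-Lipschitz continuity \eqref{eq:fq} and boundedness of $f$ and $\psi$, the lower bound $f\geq\underline f$ (used exactly as around \eqref{eq:it2} to reduce the big jumps of $S^{N,\alpha}$ to the region where $a(y)=c+y^q$), the integrability of $x\mapsto x\,x^{-1-\alpha}$ on $[0,1]$ and of $x\mapsto x^q x^{-1-\alpha}$ on $[1,\infty)$ (here $q<\alpha$), and the compensation formula. The only genuinely new point is that whenever a Wasserstein $a$-distance between $\mu^N_s$ and $\tilde\mu^N_s$ appears --- through Assumption~\ref{ass:b}~iii) in $B_t$, and through $|\mu^N_{s-}(f)-\tilde\mu^N_{s-}(f)|$ in $I_t$ and $\Psi_t$ --- one uses the identity coupling pairing $X^{N,j}_s$ with $\tilde X^{N,j}_s$, so that after taking expectations and using exchangeability of $(X^{N,j},\tilde X^{N,j})_{j}$ (both systems are driven by the i.i.d.\ $\bar\pi^j$, the common $S^{N,\alpha}$ and the i.i.d.\ $X^j_0$),
\[
\E\Bigl(\tfrac1N\sum_{j=1}^N|a(X^{N,j}_s)-a(\tilde X^{N,j}_s)|\Bigr)=u(s).
\]
This yields $\E(|B_t|+|\Psi_t|+|I_t|)\le C\int_0^t u(s)\,ds$. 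Combining with Theorem~\ref{theo:2main} gives $u(t)\le \E(|\bar R^{N,1}_t|)+C\int_0^t u(s)\,ds$, and Gronwall's lemma on $[0,T]$ gives $u(t)\le C_T\sup_{s\le T}\E(|\bar R^{N,1}_s|)$. Finally, since $q<\alpha$ we have $N^{1-q/\alpha}\geq1$, so the term $\delta^{q/\alpha}$ in the bound of Theorem~\ref{theo:2main} is dominated by $N^{1-q/\alpha}\delta^{q/\alpha}$, and the claimed estimate follows.

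\textbf{Main obstacle.} There is no deep difficulty: the argument is a line-by-line replay of the estimates already carried out for Theorem~\ref{theo:uniqueness}. The only points requiring care are (a) that one must stay at a \emph{fixed} $t$ and never take a supremum over time, since the bound on $\bar R^{N,i}_t$ in Theorem~\ref{theo:2main}, and hence the final bound, is only pointwise in $t$ (this is the same limitation discussed in the Remarks after Theorem~\ref{strongconvergence}); and (b) the big-jump part of $I_t$, where the strict lower bound on $f$ is essential in order to land in the region where $a(y)=c+y^q$ and reduce to the integrability of $x^q x^{-1-\alpha}$.
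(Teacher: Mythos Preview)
Your proposal is correct and follows essentially the same route as the paper: subtract the It\^o expansion of $a(\tilde X^{N,1})$ from the representation \eqref{eq:axfinite}, bound the drift/big-jump/subordinator terms exactly as in the proof of Theorem~\ref{theo:uniqueness} using the empirical coupling $\frac1N\sum_j\delta_{(X^{N,j}_s,\tilde X^{N,j}_s)}$ and exchangeability, and close with Gronwall. The only minor slip is that $\Psi_t$ involves $|f(X^{N,1}_{s-})-f(\tilde X^{N,1}_{s-})|$ rather than $|\mu^N_{s-}(f)-\tilde\mu^N_{s-}(f)|$, but this is harmless since both are controlled by the same quantity after taking expectations.
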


\begin{proof}
Ito's formula implies that 
\begin{multline}
a(\tilde X^{N, i}_t) =  a(X^{i}_0) +   \int_0^t  a' ( \tilde X^{N, i}_s) b(\tilde X^{N, i}_s , \tilde \mu_s^N)  ds \\
+  \int_{[0,t]\times\R_+}[  a(\tilde X^{N, i}_{s-} +  \psi(\tilde X^{N, i}_{s-})) - a(\tilde X^{N, i}_{s-}) ]   \indiq_{ \{ z \le  f ( \tilde X^{N, i}_{s-}) \}} \bar \pi^i (ds,dz) \\
+\int_{[0, t ]  \times \R_+} [a(\tilde X^{N, i}_{s-} + (\tilde \mu^N_{ s-} (f) )^{ 1 / \alpha }  x ) - a(\tilde X^{N, i}_{s-}) ]   M  ( ds, dx) ,
\end{multline}
where $M$ is the jump measure of $ S^{N, \alpha}.$

Since $ \frac1N \sum_{i=1}^N \delta_{ ( X_s^{N, j },  \tilde X_s^{N, j })} $ is an obvious coupling of $ \mu_s^N $ and of $ \tilde \mu_s^N, $
by the properties of $b$ and using that $| d_q (x) - d_q(y) |\le C |a(x) - a(y) | , $ 
$$ |b( X^{N, i}_s ,  \mu^N_s ) - b(\tilde X^{N, i}_s , \tilde \mu^N_s )| \le C ( | a(X^{N, i}_s) -a(\tilde X^{N, i}_s) | +\frac{1}{N} \sum_{j=1}^N | a(X^{N, j}_s) -a(\tilde X^{N, j}_s) |) .$$ 

Using the representation \eqref{eq:axfinite}, the properties of the function $a,$ the fact that $\psi $ is bounded and $a-$Lipschitz,  the notation of Lemma \ref{lem:it} and the above bound on the differences in the drift, this implies that,  for a convenient constant $C > 0, $
\begin{multline}
\frac1C  |a( X_t^{N, i } )- a(\tilde X_t^{N, i } )| \le  \int_0^t \left( | a( X_s^{N, i } )- a(\tilde X_s^{N, i } )| +\frac{1}{N} \sum_{j=1}^N | a(X^{N, j}_s) -a(\tilde X^{N, j}_s) | \right)ds \\
 + \int_{[0,t]\times\R_+} | a( X^{N, i}_{s-} ) - a(\tilde X^{N, i}_{s-}) |   \indiq_{ \{ z \le  \|f\|_\infty \}} \bar \pi^i (ds,dz) \\
 +  \int_{[0,t]\times\R_+}    \indiq_{ \{ f ( \tilde X^{N, i}_{s-}) \wedge f (  X^{N, i}_{s-}) <z \le  f ( \tilde X^{N, i}_{s-}) \vee f (  X^{N, i}_{s-})} \bar \pi^i (ds,dz)\\
 + I_t ( X^{N, i } , \tilde X^{N, i } , \mu^N , \tilde \mu ^N )  + |\bar R_t^{N, i}| ,
 \end{multline}
where $\bar R_t^{N, i}$ is as in Theorem \ref{theo:2main}.

Taking expectation, using the fact that $f$ is $a-$Lipschitz and the exchangeability of both systems, together with \eqref{eq:samex2} of Lemma \ref{lem:it}, this implies that 
$$ \E  |a( X_t^{N, i } )- a(\tilde X_t^{N, i } )|  \le C \int_0^t \E  |a( X_s^{N, i } )- a(\tilde X_s^{N, i } )|  ds + \E ( |\bar R_t^{N, i}|).  $$
Gronwall's lemma then implies the result. 
\end{proof}

To finally compare the auxiliary system with the limit system driven by $ S^{N, \alpha},$ we need to control the distance between $ \bar \mu_t^N = \frac{1}{N} \sum_{i=1}^N \delta_{ \bar X_t^{N, i } } ,$  the empirical measure of the limit system, and $\bar \mu_t = {\mathcal L} ( \bar X_t^{N, i } | S^{N, \alpha} ) .$ We quote the following result, which is based on Theorem 1 of \cite{fournierguillin}, from \cite{DEE}.

\begin{prop}\label{prop:fg}{(Proposition 5.6 of \cite{DEE}, Theorem 1 of \cite{fournierguillin})}
Grant Assumption \ref{ass:nu0bis}. Then we have that 
\begin{equation}\label{eq:wasserstein}
\E ( W_q ( \bar \mu_t^N, \bar \mu_t ) ) \le C_t (q, \alpha) 
 \left\{ 
\begin{array}{ll}
N^{ - 1/2}, & q > \frac12 \\
N^{- q }, & q < \frac12 
\end{array} 
\right\} .
\end{equation}
\end{prop}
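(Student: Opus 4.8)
\emph{Approach.} The plan is to reduce the statement to the quantitative law of large numbers for empirical measures of i.i.d.\ samples (the results of \cite{fournierguillin}), after conditioning on the common driving noise. The key observation is that, in the limit system \eqref{eq:limitsystem} driven by $S^{N,\alpha}$, the particles $\bar X^{N,1},\dots,\bar X^{N,N}$ all solve one and the same equation, with common subordinator $S^{N,\alpha}$ but with the individual, independent Poisson measures $\bar\pi^i$ and the i.i.d.\ initial conditions $X_0^i$; moreover the coefficient $\bar\mu_s$ is, by its very definition, a measurable functional of $(S^{N,\alpha}_u)_{u\le s}$ alone. Since pathwise uniqueness holds (Theorem \ref{theo:uniqueness}) and the pairs $(\bar\pi^i,X_0^i)_i$ are i.i.d.\ and jointly independent of $S^{N,\alpha}$, I would first argue (via a Yamada--Watanabe-type regular-conditional-distribution argument) that, conditionally on $S^{N,\alpha}$, the variables $\bar X^{N,1}_t,\dots,\bar X^{N,N}_t$ are i.i.d.\ with common law $\bar\mu_t=\mathcal{L}(\bar X^{N,1}_t\mid S^{N,\alpha})$. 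Hence, conditionally on $S^{N,\alpha}$, $\bar\mu_t^N$ is precisely the empirical measure of $N$ i.i.d.\ draws from $\bar\mu_t$, and $\E(W_q(\bar\mu_t^N,\bar\mu_t))=\E\big[\E(W_q(\bar\mu_t^N,\bar\mu_t)\mid S^{N,\alpha})\big]$.

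Next I would apply \cite[Theorem~1]{fournierguillin} conditionally on $S^{N,\alpha}$. Since $q\le 1$, the present paper's $W_q$ is exactly the un-normalised optimal transport cost for the cost $|x-y|^q$, so that result applies directly in dimension $d=1$ with transport exponent $q$ and moment exponent $\mathfrak m:=(2\alpha)\vee 1>q$, giving, for any probability $\rho$ on $\R$ with $M_{\mathfrak m}(\rho):=\int|x|^{\mathfrak m}\rho(dx)<\infty$, a bound of the form $\E(W_q(\rho_N,\rho))\le C(q,\alpha)\,M_{\mathfrak m}(\rho)^{q/\mathfrak m}\big(r_N+N^{-(\mathfrak m-q)/\mathfrak m}\big)$, where $r_N=N^{-q}$ if $q<\tfrac12$ and $r_N=N^{-1/2}$ if $q>\tfrac12$. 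A short computation shows the secondary term is always negligible in front of $r_N$: one needs $q\le \mathfrak m/(\mathfrak m+1)$ when $q<\tfrac12$, which holds since $\mathfrak m\ge 1$ forces $\mathfrak m/(\mathfrak m+1)\ge\tfrac12$; and $q\le\mathfrak m/2$ when $q>\tfrac12$, which is automatic since $q>\tfrac12$ forces $\alpha>\tfrac12$, hence $\mathfrak m=2\alpha>2q$ (this also shows the excluded value $\mathfrak m=2q$ is never reached; any logarithmic correction from the remaining excluded case of \cite{fournierguillin} does not affect the leading order). Taking $\rho=\bar\mu_t$ yields $\E(W_q(\bar\mu_t^N,\bar\mu_t)\mid S^{N,\alpha})\le C(q,\alpha)\,M_{\mathfrak m}(\bar\mu_t)^{q/\mathfrak m}\,r_N$.

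The final step is to show $\E\big[M_{\mathfrak m}(\bar\mu_t)^{q/\mathfrak m}\big]<\infty$, and here the moment assumption $\E(|X_0|^{(2\alpha)\vee 1})<\infty$ (item iv) of Assumption \ref{ass:1}) is used crucially. Starting from the pathwise bound $\sup_{s\le t}|\bar X_s|\le |X_0|+\|b\|_\infty t+\|\psi\|_\infty\,\bar\pi([0,t]\times[0,\|f\|_\infty])+\|f\|_\infty^{1/\alpha}S_t^\alpha$ used in Remark \ref{rem:33}, raising it to the power $\mathfrak m$ and taking conditional expectation given $S^\alpha$ (using that $b,f,\psi$ are bounded and that $\bar\pi([0,t]\times[0,\|f\|_\infty])$ is Poisson and independent of $S^\alpha$, and that $\E|X_0|^{\mathfrak m}<\infty$) gives $M_{\mathfrak m}(\bar\mu_t)=\E(|\bar X_t|^{\mathfrak m}\mid S^\alpha)\le C_t\big(1+(S_t^\alpha)^{\mathfrak m}\big)$ a.s., as in \eqref{eq:mufin}. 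Hence $M_{\mathfrak m}(\bar\mu_t)^{q/\mathfrak m}\le C_t\big(1+(S_t^\alpha)^{q}\big)$, and since $q<\alpha$ the $\alpha$-stable subordinator has a finite $q$-th moment, so $\E\big[M_{\mathfrak m}(\bar\mu_t)^{q/\mathfrak m}\big]\le C_t(q,\alpha)<\infty$. Combining the three steps proves \eqref{eq:wasserstein}.

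I expect the main obstacle to be the first step: rigorously justifying that the $\bar X^{N,i}_t$ are conditionally i.i.d.\ given $S^{N,\alpha}$ requires identifying the strong solution of the nonlinear SDE \eqref{eq:limitsystem} as a measurable functional of the driving data $(S^{N,\alpha},\bar\pi^i,X_0^i)$ — which follows from pathwise uniqueness plus weak existence — together with the measurability of $s\mapsto\bar\mu_s$ as a deterministic functional of the frozen subordinator path. The remaining steps, namely the invocation of \cite{fournierguillin} and the moment estimate, are routine.
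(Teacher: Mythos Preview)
Your proposal is correct and follows essentially the same approach as the paper: condition on $S^{N,\alpha}$ to reduce to i.i.d.\ samples from $\bar\mu_t$, apply Theorem~1 of \cite{fournierguillin} conditionally, and then control the conditional moment via the pathwise bound of Remark~\ref{rem:33}. The only cosmetic difference is that the paper splits into two cases using moment exponent $1$ when $q<\tfrac12$ and $2\alpha$ when $q>\tfrac12$, whereas you use $\mathfrak m=(2\alpha)\vee1$ uniformly; both routes yield the same rates.
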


We may now state our final auxiliary result that controls the distance between the auxiliary particle system and the limit system. 
 
\begin{prop}\label{prop:52}
Grant Assumptions ~\ref{ass:b}, ~\ref{ass:1} and \ref{ass:nu0bis}. Then for all $ t \le T,$ 
$$ \E \left(   | a(\bar X_t^{N, i })  - a( \tilde X_t^{N, i }) | \right) \le C_T (q, \alpha) [ N^{ - q}  \indiq_{\{ q < \frac12\}} + N^{- 1/2 }\indiq_{\{ q > \frac12\}}].$$
Moreover, if \eqref{eq:linearb} holds with bounded $ \tilde b,$ than we obtain the better control 
$$ \E \left(   | a(\bar X_t^{N, i })  - a( \tilde X_t^{N, i }) | \right) \le C_T N^{- 1/2}.$$
\end{prop}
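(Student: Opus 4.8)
The plan is to compare the two systems by It\^o's formula, following exactly the scheme of the proof of Theorem~\ref{theo:uniqueness}: both $\bar X^{N,i}$ and $\tilde X^{N,i}$ are driven by the same data --- the Poisson measure $\bar\pi^i$, the subordinator $S^{N,\alpha}$ and the initial value $X^i_0$ --- and have the same structure (a bounded drift, the $\psi$-jump term governed by $\bar\pi^i$, and a stochastic integral against $S^{N,\alpha}$ with integrand $(\,\cdot(f)\,)^{1/\alpha}$). Writing $a(\bar X^{N,i}_t)-a(\tilde X^{N,i}_t)=B_t+\Psi_t+I_t$ as in \eqref{eq:ito}, now with $\bar\mu_s$ and $\tilde\mu^N_s$ in the roles of $\bar\mu_s$ and $\tilde\mu_s$ and $M$ the jump measure of $S^{N,\alpha}$, every estimate obtained there (the splitting of $I_t$ into small and big jumps and the bounds \eqref{eq:it1}, \eqref{eq:it2}, \eqref{Bterm}, \eqref{al:b}, \eqref{al:b1}, which crucially use that $f$ is bounded and bounded below by $\underline f$) carries over verbatim, with a single difference: $\tilde\mu^N_s$ is an empirical measure, not a conditional law, so the ``measure difference'' quantities $|\bar\mu_{s-}(f)-\tilde\mu^N_{s-}(f)|$ (arising in $I_t$ and $\Psi_t$) and $\mathcal W_a(\bar\mu_s,\tilde\mu^N_s):=\inf_{\pi\in\Pi(\bar\mu_s,\tilde\mu^N_s)}\int|a(y)-a(y')|\,\pi(dy,dy')$ (arising in $B_t$ through item iii) of Assumption~\ref{ass:b}) can no longer be rewritten as $C\,\E[\,|a(\bar X^{N,i}_s)-a(\tilde X^{N,i}_s)|\mid S^{N,\alpha}]$.

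I would deal with this by introducing $\bar\mu^N_s:=\frac1N\sum_{j=1}^N\delta_{\bar X^{N,j}_s}$, the empirical measure of the limit system driven by $S^{N,\alpha}$, and inserting it via the triangle inequality: since $\frac1N\sum_j\delta_{(\bar X^{N,j}_s,\tilde X^{N,j}_s)}$ is a coupling of $\bar\mu^N_s$ and $\tilde\mu^N_s$ and $f$ is $a$-Lipschitz,
$$|\bar\mu_s(f)-\tilde\mu^N_s(f)|\le C\,\mathcal W_a(\bar\mu_s,\bar\mu^N_s)+\frac CN\sum_{j=1}^N|a(\bar X^{N,j}_s)-a(\tilde X^{N,j}_s)|,$$
and the same bound holds for $\mathcal W_a(\bar\mu_s,\tilde\mu^N_s)$. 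Taking expectations and using that the pairs $(\bar X^{N,j},\tilde X^{N,j})_{1\le j\le N}$ are exchangeable (both systems are built from the i.i.d. data $\bar\pi^j,X^j_0$ and the common $S^{N,\alpha}$), the average $\frac1N\sum_j|a(\bar X^{N,j}_s)-a(\tilde X^{N,j}_s)|$ has the same expectation as $|a(\bar X^{N,i}_s)-a(\tilde X^{N,i}_s)|$, so after collecting all the terms one obtains
$$\E|a(\bar X^{N,i}_t)-a(\tilde X^{N,i}_t)|\le C_T\int_0^t\E|a(\bar X^{N,i}_s)-a(\tilde X^{N,i}_s)|\,ds+C_T\int_0^t\E\,\mathcal W_a(\bar\mu_s,\bar\mu^N_s)\,ds.$$
Because $a$ is concave on $\R_+$ with $a(x)\le C(x\wedge x^q)$, one has $|a(y)-a(y')|\le C|y-y'|^q$ and hence $\mathcal W_a(\mu,\nu)\le C\,W_q(\mu,\nu)$; since moreover $\bar\mu^N_s$ is the empirical measure of $N$ i.i.d. (conditionally on $S^{N,\alpha}$) samples from $\bar\mu_s$, Proposition~\ref{prop:fg} gives $\E\,\mathcal W_a(\bar\mu_s,\bar\mu^N_s)\le C_T(q,\alpha)\big[N^{-q}\indiq_{\{q<\frac12\}}+N^{-1/2}\indiq_{\{q>\frac12\}}\big]$ uniformly for $s\le T$. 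Gronwall's lemma then yields the first assertion.

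For the improved rate under \eqref{eq:linearb} with bounded $\tilde b$, the point is that both $\mu\mapsto b(x,\mu)=\int\tilde b(x,y)\mu(dy)$ and $\mu\mapsto\mu(f)$ are \emph{linear}, so the measure differences can be controlled by a variance (law of large numbers) argument, which produces the faster rate $N^{-1/2}$. For the drift term I would write
$$b(\bar X^{N,i}_s,\bar\mu_s)-b(\tilde X^{N,i}_s,\tilde\mu^N_s)=\big[b(\bar X^{N,i}_s,\bar\mu_s)-b(\tilde X^{N,i}_s,\bar\mu_s)\big]+\int\tilde b(\tilde X^{N,i}_s,y)\,(\bar\mu_s-\tilde\mu^N_s)(dy),$$
bound the first bracket by $C|a(\bar X^{N,i}_s)-a(\tilde X^{N,i}_s)|$ ($a$-Lipschitzianity of $b$ in its first variable), and in the integral replace first the $\tilde X^{N,j}_s$ by $\bar X^{N,j}_s$ and then $\tilde X^{N,i}_s$ by $\bar X^{N,i}_s$, at the cost of $\frac CN\sum_j|a(\bar X^{N,j}_s)-a(\tilde X^{N,j}_s)|$ and $C|a(\bar X^{N,i}_s)-a(\tilde X^{N,i}_s)|$ respectively (using that $y\mapsto\tilde b(x,y)$ is $a$-Lipschitz uniformly in $x$, which follows from \eqref{eq:boundb}). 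What remains is the genuine fluctuation $\int\tilde b(\bar X^{N,i}_s,y)\bar\mu_s(dy)-\frac1N\sum_{j=1}^N\tilde b(\bar X^{N,i}_s,\bar X^{N,j}_s)$: isolating the diagonal term $j=i$, which is $O(1/N)$ since $\tilde b$ is bounded, and conditioning on $(S^{N,\alpha},\bar X^{N,i}_s)$ --- under which the $(\bar X^{N,j}_s)_{j\ne i}$ remain i.i.d. with law $\bar\mu_s$ --- the conditional variance of the average of the $N-1$ bounded i.i.d. terms is $O(1/N)$, so this contribution has $L^1$-norm $O(N^{-1/2})$. The terms $|\bar\mu_{s-}(f)-\tilde\mu^N_{s-}(f)|$ are handled the same (in fact simpler, since $f$ does not involve particle $i$) way: $|\bar\mu_{s-}(f)-\bar\mu^N_{s-}(f)|$ has $L^1$-norm $O(N^{-1/2})$ by the analogous conditional-variance bound with bounded $f$. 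Feeding these estimates into the It\^o bound in place of $\mathcal W_a(\bar\mu_s,\bar\mu^N_s)$ and applying Gronwall's lemma yields $\E|a(\bar X^{N,i}_t)-a(\tilde X^{N,i}_t)|\le C_T N^{-1/2}$.

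The main obstacle I anticipate is this last point in the linear case: the fluctuation term has $\bar X^{N,i}_s$ appearing both as the ``test point'' inside $\tilde b(\bar X^{N,i}_s,\cdot)$ and as one of the $N$ empirical samples, which destroys the naive conditional-i.i.d. structure; removing the diagonal contribution $j=i$ and conditioning on $(S^{N,\alpha},\bar X^{N,i}_s)$ to restore it is the key technical step. Everything else is a rerun of the estimates from the proof of Theorem~\ref{theo:uniqueness}, combined with Proposition~\ref{prop:fg} and the exchangeability of the coupled system.
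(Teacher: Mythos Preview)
Your proposal is correct and follows essentially the same approach as the paper: both introduce the intermediate empirical measure $\bar\mu^N_s=\frac1N\sum_j\delta_{\bar X^{N,j}_s}$, use the empirical coupling and exchangeability to absorb the $\bar\mu^N_s$-to-$\tilde\mu^N_s$ part into the Gronwall term, invoke Proposition~\ref{prop:fg} for $W_q(\bar\mu^N_s,\bar\mu_s)$, and in the linear case replace the Wasserstein bound by a conditional-variance argument (isolating the diagonal $j=i$ and conditioning on $(S^{N,\alpha},\bar X^{N,i}_s)$). The only organizational difference is that the paper first rewrites $a(\bar X^{N,i}_t)$ with $\bar\mu^N_s$ in place of $\bar\mu_s$, packaging the whole ``$\bar\mu_s\to\bar\mu^N_s$'' substitution into a single error term $R_t^{N,4}$, and then compares two systems both driven by empirical measures as in Proposition~\ref{prop:51}; you instead compare $\bar X^{N,i}$ and $\tilde X^{N,i}$ directly and insert $\bar\mu^N_s$ via the triangle inequality inside each measure-difference term --- these are equivalent rearrangements of the same estimate.
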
 

\begin{proof}
We rewrite for any $ 1 \le i \le N , $ 
\begin{multline}\label{eq:limitsystemtrois}
a(\bar X^{N,i}_t )=  a(X^{i}_0) +   \int_0^t a'(\bar X^{N, i}_s)  b(\bar X^{N, i}_s , \bar \mu^N_s )  ds\\
 +  \int_{[0,t]\times\R_+ } [ a (\bar X^{ N,i}_{s-} +\psi (\bar X^{ N,i}_{s-} )) -  a (\bar X^{ N,i}_{s-} )]  \indiq_{ \{ z \le  f ( \bar X^{N, i}_{s-}) \}} \bar \pi^i (ds,dz) \\
+ \int_{[0,t] \times \R_+ } \left[ a( \bar X^{ N,i}_{s-} +  (\bar \mu^N_{s-}(f) )^{1/ \alpha} x ) - a (\bar X^{ N,i}_{s-} )\right]  M(ds, dx)   + R_t^{N, 4 } , 
\end{multline} 
where $ \bar \mu_t^N = \frac{1}{N} \sum_{i=1}^N \delta_{ \bar X_t^{N, i } } $ is the empirical measure of the limit system and where 
\begin{multline}\label{eq:64}
 R_t^{N, 4 } = \int_0^t a'(\bar X^{N, i}_s) \left[ b(\bar X^{N, i}_s , \bar \mu_s ) - b(\bar X^{N, i}_s , \bar \mu^N_s ) \right] ds \\
 + \int_{[0,t] \times \R_+ } \left[ a( \bar X^{ N,i}_{s-} +  (\bar \mu_{s-}(f) )^{1/ \alpha} x ) - a( \bar X^{ N,i}_{s-} +  (\bar \mu^N_{s-}(f) )^{1/ \alpha} x )\right] M(ds, dx) .
\end{multline} 
We first work under Assumption \ref{ass:b} only and obtain, using the same arguments as in the proof of Proposition \ref{prop:51}, 
$$ \E \left(   | a(\bar X_t^{N, i })  - a( \tilde X_t^{N, i }) | \right)  \le C \int_0^t  \E \left(   | a(\bar X_s^{N, i })  - a( \tilde X_s^{N, i }) | \right) ds +  \E (| R_t^{N, 4 } |).$$
By the properties of the function $b,$ 
$$ |  b(\bar X^{N, i}_s , \bar \mu_s ) - b(\bar X^{N, i}_s , \bar \mu^N_s ) | \le C W_q ( \bar \mu_s, \bar \mu_s^N).$$
Moreover, using once more the result of Lemma \ref{lem:it},
\begin{multline*}
\E \left| \int_{[0,t] \times \R_+ } \left[ a( \bar X^{ N,i}_{s-} +  (\bar \mu_{s-}(f) )^{1/ \alpha} x ) - a( \bar X^{ N,i}_{s-} +  (\bar \mu^N_{s-}(f) )^{1/ \alpha} x )\right] M(ds, dx) \right| 
\\
\le 
C \E \int_0^t \left| \bar \mu_{s}(f) ) - \bar \mu^N_{s}(f) \right|  ds  .
\end{multline*}
Due to \eqref{eq:fq2}, we have that 
$$ \left| \bar \mu_{s}(f) ) - \bar \mu^N_{s}(f) \right| \le C W_q ( \bar \mu_s, \bar \mu_s^N ) .$$ 
Taking expectation, the conclusion then follows from 
$$ \E | R_t^{N, 4 } |  \le C \int_0^t  \E W_q ( \bar \mu_s^N, \bar \mu_s ) ds  ,$$
such that Proposition \ref{prop:fg} allows to conclude.

We finally work under the stronger Assumption that \eqref{eq:linearb} holds with bounded $ \tilde b.$  Going back to formula \eqref{eq:64}, we see that we have to control 
$$ |b ( \bar X^{N, i }_s , \bar \mu_s ) - b ( \bar X^{N, i }_s, \bar \mu^N_s)|.$$

Since $ b$ is linear in the measure variable, we obtain 
$$
b ( \bar X^{N, i }_s, \bar \mu^N_s ) - b ( \bar X^{N, i }_s, \bar \mu_s) =
  \frac1N \sum_{j=1}^N \left(  \tilde  b ( \bar X^{N, i }_s, \bar X^{N, j}_s)   - \int_\R  \tilde b ( \bar X^{N, i }_s, y  ) \bar \mu_s ( dy) \right)  .
$$
For any $j \neq i, $ conditioning on $S^\alpha $ and $ \bar X^{N, i }_s, $ the variables $Y^j_s :=  \tilde  b ( \bar X^{N, i }_s, \bar X^{N, j}_s )  - \int_\R  \tilde b ( \bar X^{N, i }_s, y  ) \bar \mu_s ( dy) $ are i.i.d., centered and bounded. 
The usual variance estimate for sums of i.i.d. variables implies then that
$$ \E \left( | \frac1N \sum_{j \neq i } Y^j_s | \; \big| S^\alpha , \bar X^{N, i }_s  \right) \le C N^{ - 1/2}.$$  
Since $ | Y^i_s | /N $ (the diagonal term) is bounded by $ C/N, $ integrating with respect to $ \bar X^{N, i }_s $ and $ S^\alpha $ then gives the upper bound 
$$ \E \left(  |b ( \bar X^{N, i }_s , \bar \mu_s ) - b ( \bar X^{N, i }_s, \bar \mu^N_s)| \right) \le C N^{ - 1/2}.$$
The expression $  \left| \bar \mu_{s}(f) ) - \bar \mu^N_{s}(f) \right| $ is controlled analogously. This concludes the proof. 
\end{proof}
We are now able to finalize the 

\begin{proof}[Proof of Theorem \ref{strongconvergence}]
Putting together Theorem \ref{theo:2main}, Proposition \ref{prop:51} and Proposition \ref{prop:52},   we obtain for all $ t \le T,$ 
\begin{multline*}
\E ( | a(X^{N,1}_t) -a(\bar X^{N, 1}_t)  | ) \leq \\
 C_T (q, \alpha)  \left( N^{ 2(1 - q/\alpha)} \delta + (N \delta)^{ - q/2} \delta^{ q/\alpha - 1 } + \delta^{q/\alpha}    + N^{ - q/\alpha} + [ N^{ - q }  \indiq_{\{ q < \frac12\}} + N^{- 1/2 }\indiq_{\{ q > \frac12\}}] \right) ,
 \end{multline*}
which holds for any $ q < \alpha.$ Since $ N \delta \to \infty, $ clearly, $ N^{ - q/\alpha} \le C \delta^{q/\alpha} $ for $N $ sufficiently large, such that we do not need to care of the term $N^{ - q/\alpha} .$ 

We now choose $ \delta  $ such that 
$$ N^{ 2(1 - q/\alpha)} \delta  =  (N \delta)^{ - q/2 } \delta^{ q/\alpha - 1 };$$
that is, 
\begin{equation}\label{eq:delta} \delta = N^{-  \frac{ 4 (1 - \frac{q}{\alpha}) + q }{ 2 ( 1 - \frac{q}{\alpha} ) + q + 2  }} .
\end{equation}
For this choice of $ \delta $ we then obtain that 
$$ N^{ 2(1 - q/\alpha)} \delta + (N \delta)^{ - q/2 } \delta^{ q/\alpha - 1 } = 2 N^{ 2 ( 1 - q/\alpha) -  \frac{ 4 (1 - \frac{q}{\alpha}) + q }{ 2 ( 1 - \frac{q}{\alpha} ) + q + 2  } }.$$
Remembering that by item vi) of Proposition \ref{prop:a}, $ \E ( | a(X^{N,1}_t) -a(\bar X^{N, 1}_t)  | ) \geq c_1 \E ( | d_q(X^{N,1}_t) -d_q(\bar X^{N, 1}_t)  | ) ,$ this  concludes the proof.  
\end{proof}

\section{Appendix}

We first give the 
\begin{proof}[Proof of Lemma \ref{lem:useful}]
We first prove the inequality for $ z=0.$ Let us write $ r= x-y.$ When $y \geq 0 $ and $ r \geq 0, $ we may use the fact that $a$ is sublinear on $ \R_+$ and obtain $ | a(x) - a (y) |  = | a ( y+r) - a(y) |  \le a (r) .$ When $ y \geq 0 $ but $ r < 0,$ since $a$ is increasing on $ \R_+,$ we have $ | a(x) - a (y) | = a( y) - a(x) = a(x -r) - a(x) \le a ( -r)  = a ( |r|) . $

Suppose now that $ y < 0.$ Then $x = y +r \le r $ and $ r \geq 0, $ such that $ x \le r$ and $ |y| = r - x \le  r.$ In this case, $| a(x) - a (y) |  \le a (x) + | a(y) |= a (x) + a ( -y) \le 2 a (  r ). $

We now turn to the proof of the general statement with $z > 0.$ Firstly, if $ y \geq 0, $ then we have $ |a(y+z) - a(x+z) | \le | a (x) - a (y) |   \le 2 a ( |x-y|)  $ according to our preliminary step. Suppose now that $ y < 0 $ but $ y + z \geq 0.$ Then $ r \geq 0, $ and we have $  |a(y+z) - a(x+z) | =a(x+z) - a(y+z) =  a ( y+ z + r ) - a(y+z) \le a (r) .$ Finally, if $ y+z < 0$ and $y < 0, $ then $ z \le | y | \le  r $ and $ |a ( y+z )| = a(|y|- z ) \le a( |y|) $ (since $a$ is increasing on $ \R_+$), which is in turn bounded by $ a (  r ) $ Moreover, since we also have that $ x \le r, $  $ a ( x+z ) \le a ( 2 r )  ,$ and this concludes the proof.
\end{proof}

We finally give the 
\begin{proof}[Proof of Remark \ref{cor:couplingboundb}]
It suffices to provide an upper bound for 
$$ b(\tilde x, \mu ) - b( \tilde x, \tilde \mu) = \int_0^1 \int_{\R_+} \delta_\mu b ( \tilde x, y, (1 - \lambda ) \tilde \mu + \lambda \mu ) (\mu - \tilde \mu ) (dy ) d \lambda .$$ 
Let $\pi ( dy, dy') $ be any coupling of $ \mu $ and $ \tilde \mu .$ Then 
\begin{multline*}
\int_0^1 \int_{\R_{+}} \delta_\mu b ( \tilde x, y, (1 - \lambda ) \tilde \mu + \lambda \mu ) (\mu - \tilde \mu ) (dy ) d \lambda \\
= \int_0^1 \int_{ \R_+\times \R_+ } \pi (dy, dy') [ \delta_\mu b ( \tilde x, y, (1 - \lambda ) \tilde \mu + \lambda \mu )  - \delta_\mu b ( \tilde x, y', (1 - \lambda ) \tilde \mu + \lambda \mu ) ]d \lambda .
\end{multline*}
By \eqref{eq:boundb}, 
$$ | \delta_\mu b ( \tilde x, y, (1 - \lambda ) \tilde \mu + \lambda \mu )  - \delta_\mu b ( \tilde x, y', (1 - \lambda ) \tilde \mu + \lambda \mu ) | \le C |a(y) - a(y') | ,$$
such that 
\begin{equation}\label{eq:firstupperboundcoupling}
\left| \int_0^1 \int_\R \delta_\mu b ( \tilde x, y, (1 - \lambda ) \tilde \mu + \lambda \mu ) (\mu - \tilde \mu ) (dy )d \lambda \right|  
\le C \int_{ \R_+\times \R_+ } \pi (dy, dy') |a(y) - a(y') | .
\end{equation}
Since the above upper bound holds for any coupling, the assertion follows. 
\end{proof}

\begin{acks}[Acknowledgments]
We thank Nicolas Fournier and Elisa Marini for illuminating comments on an early version of this paper. 

The authors acknowledge support of the Institut Henri Poincar\'e (UAR 839 CNRS-Sorbonne Universit\'e), and LabEx CARMIN (ANR-10-LABX-59-01). This work has been conducted as part of  the ANR project ANR-19-CE40-0024.
\end{acks}


\end{document}